\author{
Mikhail Ganzhinov\\
Department of Communications and Networking\\
Aalto University School of Electrical Engineering\\
P.O.\ Box 15400, 00076 Aalto, Finland\\
}
\date{}
\newtheorem{theorem}{Theorem}
\newtheorem{proposition}{Proposition}
\newtheorem{definition}{Definition}
\newtheorem{lemma}{Lemma}
\newtheorem{corollary}{Corollary}
\newtheorem{question}{Question}
\theoremstyle{remark}
\newtheorem{remark}{Remark}
\newcommand{\SL}{\textrm{SL}}
\newcommand{\GL}{\textrm{GL}}
\newcommand{\Det}{\textrm{det}}
\newcommand{\Tr}{\textrm{Tr}}
\title{Infinite families of optimal systems of biangular lines related to representations of $\SL(2,\mathbb{F}_q)$.}
\begin{document}
\maketitle
\begin{abstract}
A line packing is optimal if its coherence is as small as possible. Most interesting examples of optimal line packings are achieving equality in some of the known lower bounds for coherence. In this paper two infinite families of real and complex biangular line packings are presented. New packings achieve equality in the real or complex second Levenshtein bound respectively. Both infinite families are constructed by analyzing well known representations of the finite groups $\SL(2,\mathbb{F}_q)$. Until now the only known infinite familes meeting the second Levenshtein bounds were related to the maximal sets of mutually unbiased bases (MUB). Similarly to the line packings related to the maximal sets of MUBs, the line packings presented here are related to the maximal sets of mutually unbiased weighing matrices. Another similarity is that the new packings are projective 2-designs. The latter property together with sufficiently large cardinalities of the new packings implies some improvement on largest known cardinalities of real and complex biangular tight frames.
\end{abstract}
\bigskip
\section{Introduction}
The Grassmannian line packing problem asks for line packings with minimal coherence. Packings with such properties are useful in signal processing applications \cite{St} as they lead to improved error correction. The exact definition of coherence may vary in different applications. 

Let $\mathcal{L}$ be a set of lines represented by the set of unit vectors $\Phi_{\mathcal{L}}\subset\mathbb{F}^n$ where the field $\mathbb{F}$ is $\mathbb{R}$ or $\mathbb{C}$.  The angle set of the line packing $\mathcal{L}$ is defined as $\mathcal{A}_\mathcal{L}=\{|\langle x,y\rangle|: x,y\in \Phi_\mathcal{L},x\neq y\}$. One natural way to define coherence $\mu_{\mathcal{L}}$ of the set $\mathcal{L}$ is to put
\begin{displaymath}
\mu_{\mathcal{L}}=\underset{\alpha\in\mathcal{A}_{\mathcal{L}}}{\textrm{max}}\alpha. 
\end{displaymath}
Line packings achieving minimal coherence among all packings of the same size in $\mathbb{F}^n$ are called \emph{optimal} or \emph{Grassmannian line packings}. For $|\mathcal{A}_\mathcal{L}|=1,2\textrm{ and }k$ corresponding line packings are called \emph{equiangular}, \emph{biangular} and \emph{k-angular}, respectively.

Other properties of line packings may also be of great interest. A finite set of unit norm vectors $\Phi\subset\mathbb{F}^n$ is a \emph{frame} if $\Phi$ spans $\mathbb{F}^n$. A frame $\Phi$ is \emph{tight} if there exists a constant $a>0$ such that $av=\sum_{\phi\in\Phi}\langle v,\phi\rangle\phi$ for any $v\in\mathbb{F}^n$. In other words, tight frames generalize some aspects of orthogonal bases. If $\Phi_\mathcal{L}$ is a tight frame, then the packing $\mathcal{L}$ is also called a tight frame, in which case $a=|\mathcal{L}|/n$. The tightness of a frame is usually desired along other properties such as optimality and a small angle set.

Several lower bounds for minimal coherence are known. Packings with equality in those bounds are guaranteed to be optimal. One of these bounds is the Welch bound \cite{Welch} stating that
\begin{displaymath}
\mu_\mathcal{L}\geq\sqrt{\frac{|\mathcal{L}|-n}{n(|\mathcal{L}|-1)}}
\end{displaymath}
for any $n$-dimensional set of lines $\mathcal{L}$. The Welch bound coincides with the first Levenshtein bound for line packings \cite{L}. Line packings with equality in this bound are exactly equiangular tight frames (ETF). These are probably the most studied line systems \cite{FF,FFF,GroupPackings,Su}. The maximal cardinality of real or complex $n$-dimensional equiangular line packings cannot exceed $n+1\choose2$ and $n^2$ lines, respectively \cite{D}. Zauner's conjecture \cite{Z} predicts existence of complex ETFs of maximal cardinality $n^2$ in every dimension. According to \cite{Conj,Conj1} existence of these packings seem to be related to abelian extentions of real quadratic fields. 

Packings in $n$-dimensional space with cardinality larger than $n+1\choose 2$ in the real case and $n^2$ in the complex case cannot be equiangular and cannot achieve equality in the Welch bound. To prove optimality of line packings with cardinality in the range ${n+1\choose 2}<|\mathcal{L}|\leq{n+2\choose 3}$ in the real case and $n^2<|\mathcal{L}|\leq n{n+1\choose 2}$ in the complex case, it is sufficient to achieve equality in the second Levenshtein bound \cite{L}. If an $n$-dimensional line system $\mathcal{L}$ is real, then the Levenshteins second bound gives that 
\begin{displaymath}
\mu_\mathcal{L}\geq\sqrt{\frac{3|\mathcal{L}|/2-n(n/2+1)}{(|\mathcal{L}|-n)(n/2+1)}},
\end{displaymath}
while in the complex case we have
\begin{displaymath}
\mu_\mathcal{L}\geq\sqrt{\frac{2|\mathcal{L}|-n(n+1)}{(|\mathcal{L}|-n)(n+1)}}.
\end{displaymath}
Line systems achieving equality in the Levenshtein's second bound are necessarily $\{0,\mu_\mathcal{L}\}$-angular projective 2-designs. Real line systems have the structure of $Q$-polynomial three class association schemes \cite{B}. Such line systems are rare. In fact, we know only finitely many constructions achieving equality in the second Levenshtein's bound that aren't either maximal sets of equiangular lines or maximal sets of mutually unbiased bases, see \cite{Haa} for a complete list. The list of known biangular projective 2-designs (i.e., line systems achieving equality in biangular relative bounds \cite{D,Hog}) is slightly longer, but remains finite. Biangular tight frames (biangular projective 1-designs) are investigated in \cite{Casazza1,Casazza} and are more numerous.  Despite that, constructing biangular tight frames with cardinalities exceeding  the upper bound for equiangular lines proves to be challenging. To underline this fact, the following question is posed in \cite{Casazza1}. 
\begin{question}
\label{Q1}
Are  there other ``infinite families" of complex biangular tight frames in addition to maximal sets of mutually unbiased bases \cite{Woo} with cardinalities exeeding square of dimension of the space they reside?
\end{question}
In this paper we construct real $\{0,\sqrt{2/q}\}$-angular line systems of cardinality $q^2-1$ in $\mathbb{R}^{q-1}$ when $q$ is an odd power of $2$. We also construct complex $\{0,\sqrt{3/q}\}$-angular line systems of cardinality $(q^2-1)/2$ in $\mathbb{C}^{(q-1)/2}$ when $q$ is a power of $3$. Both constructions produce infinite families of systems of biangular lines which achieve equality in the Levenshtein's second bound. These are the first infinite families unrelated to mutually unbiased bases. The cardinalities of the line systems in both families are large enough to give positive answer to Question \ref{Q1}. Interestingly, the new line systems are related to mutually unbiased weighing bases \cite{Be}.

The paper is organized as follows. In Section 2 we describe the mathematical machinery and the notations that will be used through the rest of the paper. In Section 3 we construct well-known $(q-1)$-dimensional representations of $\SL(2,\mathbb{F}_q)$. In Section 4 we  construct real infinite family of optimal line systems related to certain $2^{2k+1}-1$-dimensional representations of $\SL(2,\mathbb{F}_{2^{2k+1}})$. Finally, in Section 5 we  construct complex infinite family of optimal line systems related to $(3^k-1)/2$-dimensional representations of $\SL(2,\mathbb{F}_{3^k})$.
\bigskip
\section{Preliminaries}

Let $\mathbb{F}_q$ be a finite field of order $q=p^n$ where $p$ is a prime and $n\in\mathbb{N}$. Let $\mathbb{L}/\mathbb{F}_q$ be a quadratic field extension of $\mathbb{F}_q$. If $q$ is odd, $\mathbb{L}$ can be defined as $\mathbb{F}_q(\rho)$ where $\rho\notin\mathbb{F}_q$ is a root of some quadratic polynomial of the form  $x^2-s$, $s\in\mathbb{F}_q$, that is irreducible over $\mathbb{F}_q$. If $q$ is even,  instead of $x^2-s$, an irreducible  polynomial  (over $\mathbb{F}_q$) of the form $x^2+x+s$ can be used. 

Every element $z$ of the field  $\mathbb{L}$ can be expressed in the form $z=x+y\rho$ and thus the additive group of $\mathbb{L}$ is isomorphic to the vector space $\mathbb{F}_q^2$. Let $\mathbb{F}_q^*$ and $\mathbb{L}^{*}$ be the multiplicative groups of the fields $\mathbb{F}_q$ and $\mathbb{L}$, respectively; these groups are cyclic. If $q$ is odd, $\mathbb{F}_q^*$ can be decomposed into two sets of size $(q-1)/2$, $QR_q$ and $NQR_q$, containing the squares (quadratic residues) and the nonsquares (quadratic nonresidues), respectively.

The field $\mathbb{L}$ has a unique nontrivial automorphism $\sigma$ fixing $\mathbb{F}_q$. If $q$ is odd, then $\sigma(x+y\rho)=x-y\rho$ while if $q$ is even then $\sigma(x+y\rho)=x+y+y\rho$ for all $x,y\in\mathbb{F}_q$. In both cases
\begin{displaymath}
\sigma(z)=z^q\textrm{ for all }z\in\mathbb{L}.
\end{displaymath}
The functions $N:\mathbb{L}\rightarrow\mathbb{F}_q$ and $S:\mathbb{L}\rightarrow\mathbb{F}_q$  defined by 
\begin{displaymath}
N(z)=z\sigma(z)=z^{q+1}\textrm{ and }S(z)=z+\sigma(z)=z+z^q
\end{displaymath}
for all $z\in\mathbb{L}$  are \emph{relative} \emph{norm} and \emph{relative trace}, respectively. A set
\begin{displaymath}
C_d=\{t\in\mathbb{L}\mid N(t)=d\}\textrm{ where }d\in\mathbb{F}_q^*
\end{displaymath}
is called a \emph{circle} of the field $\mathbb{L}$. If $d=1$, the notation $C_1=C$ will be used. Each circle consists of $q+1$ elements, and $C$ is a cyclic subgroup of $\mathbb{L}^*$. The \emph{absolute trace}  of the field $\mathbb{F}_q$ is the function $\Tr:\mathbb{F}_q\rightarrow\mathbb{F}_p$ defined by
\begin{displaymath}
\Tr(x)=x+x^p+\ldots+x^{p^{n-1}} \textrm{ for all } x\in\mathbb{F}_q.
\end{displaymath}
It is well known that while the norm is a multiplicative function, the trace is additive.

Let $A$ be an abelian group. A function $\rho:A\rightarrow\mathbb{C}\setminus\{0\}$ is called a \emph{character} of the group $A$ if it is a homomorphism. The set $\hat{A}$ formed by all characters of $A$ is a group under pointwise multiplication. The group $\hat{A}$ is always isomorphic to $A$. The character $\rho_0$ is said to be trivial if $\rho_0(a)=1$ for all $a\in A$. The order of a character $\rho$ is defined to be the smallest integer $k$ such that $\rho^k=\rho_0$.
Additive characters of the field $\mathbb{F}_q$ are homomorphisms from the additive group of the field $\mathbb{F}_q$ to $\mathbb{C}\setminus\{0\}$. The nontrivial character $\chi$ of $\mathbb{F}_q$ can be defined by following equation
\begin{displaymath}
\chi(x)=e^{2\pi i\Tr(x)/p} \textrm{ for all }x\in\mathbb{F}_q.
\end{displaymath}
Similarly, multiplicative characters of a group $A\leq\mathbb{F}_q^*$ are homomorphisms from $A$ to $\mathbb{C}\setminus\{0\}$.

A delta function $\delta_c:\mathbb{F}\rightarrow\mathbb{R}$, where $\mathbb{F}$ is either a field or a multiplicative group of a field (depending on the context) and $c\in\mathbb{F}$ is defined as
\begin{displaymath}
\delta_c(x)=\begin{cases}
1,&\text{if } x=c\\
0,&\text{otherwise}
\end{cases}
\end{displaymath}
for all $x\in\mathbb{F}$. If $c=0$, we simply write $\delta$ instead of $\delta_0$.

The special linear group $G=\SL(2,\mathbb{F}_q)$ is a set of matrices
$\{
\arraycolsep=1pt\def\arraystretch{1}g=\left[
\scriptsize
\begin{array}{cc}
a&b\\
c&d
\end{array}
\right]
\mid a,b,c,d\in\mathbb{F}_q, \textrm{ det}(g)=1\}$ with the group operation being matrix multiplication. The group formed by invertible upper triangular matrices $\arraycolsep=1pt\def\arraystretch{1}g=\left[
\scriptsize
\begin{array}{cc}
a&b\\
0&d
\end{array}
\right]$ where $a,d\in\mathbb{F}_q^*$ and $b\in\mathbb{F}_q$  will be denoted by $B$.

Finally, the Hilbert space of real or complex (depending on the context) functions over the set $S$  is  denoted by $H(S)$ and the associated norm of a vector $v\in H(S)$  is  denoted by $||v||_{H(S)}$.

\bigskip
\section{Some representations of finite groups $\SL(2,\mathbb{F}_q)$}

In representation theory, elements of groups are represented as linear transformations of vector spaces. A representation of a group $G$ is a pair $(V,\phi)$ where $V$ is a vector space and $\phi:G\rightarrow\GL(V)$ is a group homomorphism. In this paper we will be interested in $(q-1)$-dimensional representations of $\SL(2,\mathbb{F}_q)$ and related line systems defined by the row and column vectors of matrices $\phi(g)$ where $g\in\SL(2,\mathbb{F}_q)$. Throughout this section we will use $G$ to denote specifically $\SL(2,\mathbb{F}_q)$.

 All irreducible representations of the groups $\SL(2,\mathbb{F}_q)$ with $q$ odd are covered (for example) in \cite{repr}. Throughout this section, the paper \cite{repr} will be used as the main source of information regarding representations of $\SL(2,\mathbb{F}_q)$. Ultimately, some of these representations will produce optimal line packings.

 As a starting point we introduce certain reducible representations of the groups $\SL(2,\mathbb{F}_q)$ which later will be decomposed into irreducible ones. Following \cite{repr} we are going to define matrices $T_{g}:H(\mathbb{L})\rightarrow H(\mathbb{L})$ for every $g\in\SL(2,\mathbb{F}_q)$ entrywise by
\begin{equation}\label{eq:1}
T_g(u,v)=\begin{cases}
-\frac{1}{q}\chi(\frac{aN(u)+dN(v)-S(u\sigma(v))}{c}),&\text{if } g\notin B\\
\chi(abN(u))\delta(v-ua),&\text{otherwise,}
\end{cases}
\end{equation}
where  $g=\left[
\scriptsize
\begin{array}{cc}
a&b\\
c&d
\end{array}
\right]$, $a,b,c,d\in\mathbb{F}_q$ and $u,v\in\mathbb{L}$ (in \cite{repr} the notation $K^{(2)}(g|u,v)$ is used for the same matrices). We shall now show that the matrices $T_g$ form a group.
\begin{proposition}
The matrices $T_g$ are unitary and form a group $T$. The group $T$ is isomorphic to $G$ with the group isomorphism $\phi:G\rightarrow T$ defined by $\phi(g)=T_g$ for all $g\in G$. 
\end{proposition}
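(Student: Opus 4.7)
The plan is to verify unitarity of each $T_g$, establish the multiplicative identity $T_{g_1}T_{g_2}=T_{g_1g_2}$ by a Bruhat-style case analysis, and conclude that $\phi:g\mapsto T_g$ is a bijective group homomorphism. Viewing $T_g$ as a matrix with respect to the standard basis $\{e_v\}_{v\in\mathbb{L}}$ of $H(\mathbb{L})$, the two branches of (\ref{eq:1}) correspond exactly to the decomposition $G=B\sqcup BwB$ with $w=\begin{pmatrix}0&-1\\1&0\end{pmatrix}$.

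First I would handle unitarity. For $g\in B$ the operator $T_g$ sends $e_v$ to $\chi(abN(v))\,e_{va}$, and since $a\in\mathbb{F}_q^*$ this is a permutation of the basis with unimodular phases, hence unitary. For $g\notin B$, unitarity amounts to the orthogonality relations $\sum_{u\in\mathbb{L}}T_g(v_1,u)\overline{T_g(v_2,u)}=\delta(v_1-v_2)$; after substituting the formula, the diagonal terms in $u$ cancel and the exponent becomes linear in $u$ whenever $v_1\neq v_2$. The crucial input is that $z\mapsto\chi(S(z)/c)$ is a nontrivial additive character of $\mathbb{L}$, so the standard orthogonality of additive characters on $\mathbb{L}$ closes the computation.

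Next, to check $T_{g_1}T_{g_2}=T_{g_1g_2}$ I would split into three cases. If both $g_1,g_2\in B$, the two delta functions in the composition collapse and the phases recombine via additivity of $\chi$ to match $T_{g_1g_2}$ on the upper-triangular product. If exactly one of $g_1,g_2$ lies in $B$, a change of variable in the single surviving character sum, together with multiplicativity of $N$ and additivity of $S$, converts the product directly into the formula for $T_{g_1g_2}$. The main obstacle is the case in which neither $g_1$ nor $g_2$ lies in $B$; expanding
\begin{displaymath}
(T_{g_1}T_{g_2})(v,w)=\frac{1}{q^2}\sum_{u\in\mathbb{L}}\chi\!\left(\tfrac{a_1N(v)+d_1N(u)-S(u\sigma(v))}{c_1}+\tfrac{a_2N(u)+d_2N(w)-S(w\sigma(u))}{c_2}\right),
\end{displaymath}
the coefficient of $N(u)$ in the exponent is $d_1/c_1+a_2/c_2=(c_1a_2+d_1c_2)/(c_1c_2)$, which is precisely the $(2,1)$-entry of $g_1g_2$ divided by $c_1c_2$. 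When this is nonzero (so $g_1g_2\notin B$), I would complete the square in $u$ over $\mathbb{L}$; the residual quadratic Gauss sum on the additive group of $\mathbb{L}$ evaluates to $-q$, cancelling the prefactor to $-1/q$ and leaving exactly the non-$B$ branch of (\ref{eq:1}) at $g_1g_2$. When it vanishes (so $g_1g_2\in B$), the exponent becomes linear in $u$ and orthogonality of characters forces a delta supported at the appropriate $w$, giving the upper-triangular branch with the correct phase. In even characteristic, where $\mathbb{L}=\mathbb{F}_q(\rho)$ is Artin-Schreier rather than Kummer, the usual completion of the square is replaced by an analogous manipulation using $\sigma(x+y\rho)=x+y+y\rho$, but the resulting Gauss-sum identity has the same shape.

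Finally, injectivity of $\phi$ follows from the support pattern of $T_g$: elements of $B$ are sent to matrices supported on the graph of $v\mapsto va$, from which $a$ and then $b$ can be read off via the phase, while elements outside $B$ produce matrices in which every entry is a unimodular multiple of $-1/q$ and from whose entries the parameters $a,c,d$ can be recovered (with $b$ then forced by $\det g=1$). Together with multiplicativity and unitarity, this shows that $T=\phi(G)$ is a group of unitary matrices and $\phi$ is a group isomorphism.
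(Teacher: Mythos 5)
Your proposal is correct and follows essentially the same route as the paper: a case split according to membership in $B$, with the only nontrivial case ($g_1,g_2\notin B$) reduced, after completing the square in $u$ over $\mathbb{L}$, to the quadratic Gauss sum $\sum_{u\in\mathbb{L}}\chi(\beta N(u))=-q$, which is exactly the paper's evaluation $I(v)=-\chi(N(v))q$. You additionally spell out unitarity, injectivity of $\phi$, and a uniform treatment of both parities of $q$ (the paper defers the odd case to Tanaka's work), but these are elaborations rather than a different argument.
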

\begin{proof}
For the case where $q$ is odd, see \cite[pp. 70-72]{repr}. If $q$ is even, then the matrices $T_g$ are real. The group property is equivalent to the equality 
\begin{equation}
\label{groupProp}
\langle T_{g_1}(u,:),T_{g_2}(:,v)^{T}\rangle=T_{g_1g_2}(u,v) \textrm{ for all } g_1,g_2\in G \textrm{ and } u,v\in\mathbb{L}. 
\end{equation}
When verifying the group property several cases arise. While we will skip some technical details, we will give the general ideas how to prove each of the arizing cases. 

If at least one of the matrices $g_i$ is an element of the group $B$, the verification of the group property is simple since one of the matrices will be monomial, i.e, in each row and column there is exactly one non-zero entry.

If $g_1g_2=1$, then verification of the group property will only involve evaluation of the linear character sums. Verification of orthogonality of each matrix $T_g$ also involves only evaluation of the linear character sums.

The most difficult case is when $g_1,g_2\notin B$. Now the problem can be reduced to computation of quadratic character sums of the form
\begin{displaymath}
I(v)=\sum_{u\in\mathbb{L}}\chi(N(u)+S(v\sigma(u))).
\end{displaymath}
If $u=x+y\rho$ and $v=z+w\rho$, then the sum $I(v)$ can be rewritten as
\begin{equation}\label{eq:sum}
I(v)=\sum_{x,y\in\mathbb{F}_q}\chi(x^2+xy+sy^2+xw+yz).
\end{equation}
To evaluate $I(0)$ observe that when $q$ is even the function $f:\mathbb{F}_q\rightarrow\mathbb{F}_q$, $f(x)=x^2$ for all $x\in\mathbb{F}_q$ is a bijection and thus $\sum_{x\in\mathbb{F}_q}\chi(x^2)=\sum_{x\in\mathbb{F}_q}\chi(x)=0$. Each circle $C_d$ contains $q+1$ points one of which is on $\mathbb{F}_q$ namely $\sqrt{d}$. Thus, the sum $I(0)$ can be rewritten as
\begin{displaymath}
I(0)=\sum_{u\in\mathbb{L}}\chi(N(u))=\chi(0)+(q+1)\sum_{u\in\mathbb{F}_q^*}\chi(N(u))=(q+1)\sum_{u\in\mathbb{F}_q}\chi(u^2)-q=-q.
\end{displaymath}
All other sums $I(v)$ are related to $I(0)$. By performing a change of variables $u\rightarrow u+v$ in \eqref{eq:sum} we obtain
\begin{displaymath}
I(v)=\sum_{x,y\in\mathbb{F}_q}\chi(x^2+xy+sy^2+z^2+zw+sw^2)=\sum_{u\in\mathbb{L}}\chi(N(v)+N(u))=-\chi(N(v))q.
\end{displaymath}
Now, verification of \eqref{groupProp} will involve either calculation of sums $I(v)$, calculation of linear character sums or the evaluation of row vectors of the matrix $T_{g_1}$ at a single element.
\end{proof}

 A group isomorphism $\phi$ defines a group homomorphism $\phi_1:G\rightarrow\GL(H(\mathbb{L}))$. Thus a pair $(H(\mathbb{L}),\phi_1)$ is a $q^2$-dimensional representation of $G$. The next goal is to decompose a given $q^2$-dimensional representation of $G$ into smaller invariant subspaces. For $t\in C$ define an operator $R_t$ by $R_tf(v)=f(tv)$  for all  $f\in H(\mathbb{L})$ and $v\in\mathbb{L}$. We have
\begin{displaymath}
T_gR_t f(v)=T_g f(tv)=-\sum_{v\in\mathbb{L}}\chi(\frac{aN(u)+dN(v)-S(u\sigma(v))}{c})f(tv)/q.
\end{displaymath}
Now a change of variables $v\to\sigma(t)v$ gives
\begin{displaymath}
T_gR_t f(u)=-\sum_{u\in\mathbb{L}}\chi(\frac{aN(tv)+dN(\sigma(t)u)-S(v\sigma(\sigma(t)u))}{c})f(u)/q=R_tT_g f(u)
\end{displaymath}
for any $f\in H(\mathbb{L})$. Thus, the operators $R_t$ commute with the matrices $T_g$. Let $\pi$ be a character of the group $C$. Let $H(\mathbb{L})_\pi$ be a subspace of $H(\mathbb{L})$ consisting of elements $f$ that satisfy $f(tu)=\pi(t)f(u) \textrm{ for all } t\in C$  and $u\in\mathbb{L}$. For all $f\in H(\mathbb{L})_\pi$
\begin{displaymath}
R_t T_g f=\pi(t)T_g f\in H(\mathbb{L})_\pi
\end{displaymath}
which makes $H(\mathbb{L})_\pi$ an invariant subspace of the representation $\phi_1$.

 If $\pi_0$ is the trivial (constant) character of $C$, then the subspace $H(\mathbb{L})_{\pi_0}$ is $q$-dimensional and consists of functions that are constant on circles $C_d$ and $\{0\}$. If the character $\pi$ is not trivial, the corresponding space $H(\mathbb{L})_{\pi}$ is $(q-1)$-dimensional since necessarily $f(0)=0$ for all  $f\in H(\mathbb{L})_\pi$. Since subspaces $H(\mathbb{L})_\pi$ are clearly orthogonal to each other, the  whole $q^2$-dimensional space $H(\mathbb{L})$ can be decomposed into a direct sum of $q+1$ subspaces $H(\mathbb{L})_\pi$, indeed, $q^2=q+q(q-1)$.

 Let $\theta=\{ x_d \mid x_d\in C_d \textrm{ for every } d\in\mathbb{F}_q^*\}$ be a system of representatives of circles $C_d$. For each nontrivial character $\pi$, an orthonormal basis $B_{\pi,\theta}=\{e_d\}$ of the subspace $H(\mathbb{L})_\pi$ is obtained by choosing
\begin{displaymath}
e_d(x)=\begin{cases}
\frac{1}{\sqrt{q+1}}\pi(x/x_d),&\text{if } x\in C_d\\
0,&\text{otherwise.}
\end{cases}
\end{displaymath}
Let $F_{\pi,\theta}:H(\mathbb{L})_\pi\rightarrow H(\mathbb{F}_q^*)$ be the unitary transformation defined by $F_{\pi,\theta}(e_d)=\delta_d$ for all $e_d\in B_{\pi,\theta}$. Representations associated with subspaces $H(\mathbb{L})_\pi$ could be constructed explicitly as the pairs $(H(\mathbb{F}_q^*),\phi_\pi)$ where $\phi_\pi:G\rightarrow\GL(H(\mathbb{F}_q^*))$ is a group homomorphism. An easy way to accomplish this is to simply put $\phi_\pi(g)=T_{g,\pi,\theta}$ where $T_{g,\pi,\theta}=F_{\pi,\theta}T_{g}F_{\pi,\theta}^{-1}$. Let $T_{\pi,\theta}$ be the set of all matrices $T_{g,\pi,\theta}$ with fixed $\pi$ and $\theta$. Explicitly, after performing matrix multiplications we get
\begin{equation}\label{Erepr}
T_{g,\pi,\theta}(u,v)=\begin{cases}
\frac{-1}{q}\chi(\frac{au+dv}{c})\sum_{t\in C}\chi(\frac{-S(tx_v\sigma(x_u))}{c})\pi(t),&\text{if } g\notin B\\
\pi(ax_u/x_v)\chi(abv)\delta(v-N(a)u),&\text{otherwise}
\end{cases}
\end{equation}
for all $v,u\in\mathbb{F}_q^*$.
\begin{definition}
Character sums of the form
\begin{displaymath}
J_\pi(z):=\frac{1}{q}\sum_{t\in C}\chi(-S(tz))\pi(t) \textrm{ where } z\in\mathbb{L}
\end{displaymath}
are called Bessel functions of the field $\mathbb{L}$.
\end{definition}
By using Bessel functions in the case $g\notin B$, the matrices $T_{g,\pi,\theta}$ can be expressed as
\begin{equation}\label{Brepr}
T_{g,\pi,\theta}(u,v)=-\chi(\frac{au+dv}{c})J_\pi(x_v\sigma(x_u)/c) \textrm{ for all } u,v\in\mathbb{F}_q^*.
\end{equation}

At this point it is good to summarize what has been done so far. We began by introducing $q^2$-dimensional representations $(H(\mathbb{L}),\phi_1)$ where matrices corresponding to elements $g\in\SL(2,\mathbb{F}_q)$ were defined by \eqref{eq:1}. Later, representations $(H(\mathbb{L}),\phi_1)$ were decomposed into invariant subspaces $H(\mathbb{L})_\pi$ where $\pi$ is a character of $C$. The representation corresponding to the subspace $H(\mathbb{L})_{\pi_0}$ where $\pi_0$ is trivial is $q$-dimensional and will not be needed in following sections. All other $(q-1)$-dimensional invariant subspaces $H(\mathbb{L})_\pi$ where $\pi$ is nontrivial were used to construct representations $(H(\mathbb{F}_q^*),\phi_\pi)$. The homomorphism $\phi_\pi$ is given explicitly by \eqref{Erepr} and \eqref{Brepr} through Bessel functions of the field $\mathbb{L}$. Representation $(H(\mathbb{F}_q^*),\phi_\pi)$ depends on the choice of representatives $\theta$. We conclude this section by remarks which clarify how exactly the choice of $\theta$ affects the corresponding representations.
\begin{remark}\label{INV}
It is clear that the particular form of the representations $(H(\mathbb{F}_q^*),\phi_\pi)$ depends on the particular choice of representatives $\theta$. Let $\theta_i=\{x_{d,i}\mid x_{d,i}\in C_d \textrm{ for every } d\in\mathbb{F}_q^*\}$, $i=1,2$ be two different systems of representatives. From the definition of Bessel functions it is easy to see that 
\begin{equation}\label{eq:Bessel}
J_\pi(tz)=\overline{\pi}(t)J_\pi(z) \textrm{ for all } t\in C.
\end{equation}
From \eqref{eq:Bessel} follows that
\begin{equation}\label{invariance}
T_{g,\pi,\theta_2}(u,v)=T_{g,\pi,\theta_1}(u,v)\overline{\pi}(x_{u,2}/x_{u,1})\overline{\pi}(x_{v,2}/x_{v,1}).
\end{equation}
when $g\notin B$ and \eqref{invariance} holds also in the case $g\in B$ which can be verified directly. In the following sections sets of biangular lines will be represented by row and column vectors of matrices $T_{g,\pi,\theta}$. Equation \eqref{invariance} implies that the choice of $\theta$ does not affect the structure of the resulting line system.
\end{remark}
\begin{remark}
If $q$ is even, by using perfectness of the field $\mathbb{L}$, it is possible to choose representatives of the circles $C_d$ to be $\theta=\{\sqrt{d}\mid d\in\mathbb{F}_q^*\}$. With this particular choice, 
\begin{displaymath}
T_{g,\pi}(u,v)=\begin{cases}
-\chi(\frac{au+dv}{c})J_\pi(\sqrt{vu}/c),&\text{if } g\notin B\\
\chi(abu)\delta(v-N(a)u),&\text{otherwise.}
\end{cases}
\end{displaymath}
In order to get rid of square roots in these expressions, the matrices $T_{g,\pi,}$ will be defined for $q$ even. Notice that $u^2=N(u)$ for all $u\in\mathbb{F}_q$. Let $T_{g,\pi}(v,u)=T_{g,\pi,\theta}(N(v),N(u))$ for all $v,u\in\mathbb{F}_q^*$. Explicitly by performing substitution we have
\begin{displaymath}
T_{g,\pi}(u,v)=\begin{cases}
-\chi(\frac{aN(u)+dN(v)}{c})J_\pi(vu/c),&\text{if } g\notin B\\
\chi(abN(u))\delta(N(v)-N(au)),&\text{otherwise}
\end{cases}
\end{displaymath}
for all $v,u\in\mathbb{F}_q^*$. The matrices $T_{g,\pi}$ define representations $(H(\mathbb{F}_q^*),\phi_{2,\pi})$ where homomorphisms $\phi_{2,\pi}$ are defined by $\phi_{2,\pi}(g)=T_{g,\pi}$ for all $g\in G$.
\end{remark}
\bigskip

\section{Sets of biangular lines related to representations of $\SL(2,\mathbb{F}_{2^{2k+1}})$}

This section is dedicated to real line systems $\mathcal{L}_\pi$ represented by row or column vectors of matrices $T_{g,\pi}$ related to representations $(H(\mathbb{F}_q^*),\phi_{2,\pi})$. In the first part of this section we will derive Propositions \ref{Norm}-\ref{BC} which express basic properties common to all such line systems. In particular, Proposition \ref{BC} gives a lower bound on coherence; line systems achieving this lower bound are always biangular.  We will show then, that a particular infinite family achieving the lower bound in Proposition \ref{BC} actually exists. This is done by estalishing  a connection through Hasse-Davenport relations between some of the functions $J_\pi$ and a certain additive character sums.

The usual $p$-norm of a vector $v\in H(\mathbb{F}_q^*)$ is denoted by $||v||_p$ where $1\leq p\leq\infty$. For $c\in\mathbb{F}_q$ define  $J_{\pi,c}(x)=J_\pi(cx)$ and $\chi_{c}(x)=\chi(cN(x))$ for all $x\in\mathbb{F}_q$. Some important basic properties of the functions $J_\pi$ which will be needed later are described in following proposition.
\begin{proposition}
\label{Norm}
For $q$ even and $\pi$ a nontrivial character of $C$, the following properties of the functions $J_\pi$ hold:
\begin{enumerate}
\item $J_{\pi}(x)\in\mathbb{R}$ and $J_\pi(x)=J_{\overline{\pi}}(x)$ for all $x\in\mathbb{F}_q^*$.
\item $\vert\vert J_\pi\vert\vert_2^2=1$.
\item $\vert\vert J_\pi^2\vert\vert_2^2=\frac{2}{q}$.
\item $\vert\vert J_\pi J_{\pi,c}\vert\vert_2^2=\frac{1}{q}$ when $c\neq0,1$.
\end{enumerate}
\end{proposition}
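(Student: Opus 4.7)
For part~(1), I exploit the involution $t \mapsto \sigma(t) = t^{-1}$ on $C$. Since $\sigma$ fixes $\mathbb{F}_q$ pointwise, for any $x \in \mathbb{F}_q^*$ we have $S(\sigma(t)x) = S(tx)$, so $\chi(-S(tx))$ is invariant under the substitution, while $\pi(\sigma(t)) = \pi(t^{-1}) = \overline{\pi(t)}$. Pairing $t$ with $\sigma(t)$ in the defining sum therefore shows $J_\pi(x) = \overline{J_\pi(x)} \in \mathbb{R}$; the reindexing $t \to t^{-1}$ then gives $J_\pi = J_{\overline\pi}$ immediately.

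For part~(2), I expand $J_\pi(x)^2$ as a double sum over $(t_1,t_2) \in C^2$ and sum over $x \in \mathbb{F}_q$ (this equals the sum over $\mathbb{F}_q^*$ because $J_\pi(0) = 0$ for nontrivial $\pi$). Orthogonality of $\chi$ forces $S(t_1+t_2) = 0$, i.e.\ $t_1+t_2 \in \mathbb{F}_q$. Writing $t_2 = t_1 + y$ with $y \in \mathbb{F}_q$ and imposing $N(t_2) = 1$ gives the factored quadratic $y(y + S(t_1)) = 0$, so $t_2 \in \{t_1, t_1^{-1}\}$. The weighted sum then reduces to $\sum_{t \in C}\pi(t^2) + \sum_{t \in C}\pi(t\cdot t^{-1})$ with a correction at the fixed point $t=1$; the first term vanishes because $t \mapsto t^2$ is a bijection of $C$ (as $|C| = q+1$ is odd) and $\pi$ is nontrivial, while the second equals $q+1$, yielding $\|J_\pi\|_2^2 = 1$ after normalization.

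For parts~(3) and~(4), the analogous expansion produces $\sum_x J_\pi(x)^2 J_\pi(cx)^2 = \frac{1}{q^3}\sum_\alpha g(\alpha) g(c\alpha)$, where $g(\alpha) = \sum_{(t_1,t_2)\in C^2 : S(t_1+t_2)=\alpha}\pi(t_1 t_2)$, so the task reduces to computing the autocorrelation of $g$. One has $g(0) = q$ directly from step~(2); for $\alpha \neq 0$, solving $N(t_1+y) = 1$ with $S(y) = \alpha$ brings in an Artin--Schreier condition characterizing $\mathrm{Im}(S) \cap \mathbb{F}_q^*$ (namely $Tr(1/\beta) = 1$) and gives an explicit real-valued formula for $g$ supported there. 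The autocorrelation carries two ``diagonal'' contributions, from $(t_3,t_4) = (t_1,t_2)$ and $(t_3,t_4) = (t_1^{-1}, t_2^{-1})$; both satisfy the linear constraint $t_1+t_2+c(t_3+t_4) \in \mathbb{F}_q$ unconditionally when $c=1$, but only on the sublocus $S(t_1+t_2)=0$ when $c \neq 1$. This doubling is exactly what separates $2/q$ from $1/q$, while the remaining off-diagonal terms cancel via $\sum_{t \in C}\pi(t^k) = 0$ for $\pi^k$ nontrivial (which holds for all $k \not\equiv 0 \bmod \mathrm{ord}(\pi)$).

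The principal obstacle is this 4-point character sum analysis. The restriction of $\mathrm{supp}(g)$ to $\mathrm{Im}(S) \subsetneq \mathbb{F}_q$ under the Artin--Schreier condition makes the autocorrelation $\sum_\alpha g(\alpha)g(c\alpha)$ delicate, and the diagonal/off-diagonal bookkeeping must carefully track how the dilation $\alpha \mapsto c\alpha$ interacts with this support before the cancellations coming from nontriviality of $\pi^k$ can be applied cleanly; this is the step where the factor $2$ in property~(3) and its absence in property~(4) become visible.
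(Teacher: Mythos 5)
Your parts (1) and (2) are fine. Part (1) is the same involution argument ($\sigma(t)=t^{-1}$ on $C$) the paper uses. For part (2) you evaluate the quadratic sum directly: expanding over $C^2$, using orthogonality of $\chi$ to force $t_1+t_2\in\mathbb{F}_q$, and solving $N(t_1+y)=1$ to get $t_2\in\{t_1,t_1^{-1}\}$ is correct and yields $q$ as claimed. The paper gets this for free by observing that the values $J_\pi(x)$, $x\in\mathbb{F}_q^*$, are (up to unimodular factors) the entries of a row of the orthogonal matrix $T_{g,\pi}$ for any $g\notin B$.

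Parts (3) and (4) are where the proposal has a genuine gap, and the sketched bookkeeping is not merely incomplete but quantitatively wrong. After your reduction the target is $\Sigma_c:=\sum\pi(t_1t_2t_3t_4)$ over $4$-tuples in $C^4$ with $c(t_1+t_2)+t_3+t_4\in\mathbb{F}_q$, which must equal $2q^2$ for $c=1$ and $q^2$ for $c\neq0,1$. Of the two ``diagonal'' families you name, $(t_3,t_4)=(t_1,t_2)$ contributes exactly $(\sum_{t\in C}\pi(t^2))^2=0$, since squaring is a bijection of $C$ ($|C|=q+1$ odd) and $\pi^2$ is nontrivial --- so it cannot supply a main term. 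For $c=1$ the main term comes from $(t_3,t_4)=(t_1^{-1},t_2^{-1})$ together with its swap $(t_2^{-1},t_1^{-1})$, and even then their union contributes $2(q+1)^2-(q+1)=2q^2+3q+1\neq2q^2$, so the off-diagonal terms do not simply cancel. Worse, for $c\neq1$ your diagonals are confined to the sublocus $t_2\in\{t_1,t_1^{-1}\}$ and contribute only $O(q)$, while the answer is $q^2$: essentially the whole main term must come from the off-diagonal tuples you propose to discard via $\sum_t\pi(t^k)=0$. (The support claim for $g$ is also doubtful: the relative trace $S$ is surjective, and $S(C)+S(C)$ covers essentially all of $\mathbb{F}_q$.) The paper sidesteps the quartic sum entirely by using the representation property: with $g_1$ antidiagonal, $Q_c(d)=\sum_xJ_\pi(x)J_\pi(cx)\chi(dN(x))$ is the $(1,1)$ entry of $T_{g_2(d)g_1,\pi}$, hence equals $\pm J_\pi(c/d)$ for $d\neq0$ and a known value at $d=0$; since $x\mapsto N(x)=x^2$ is a bijection for $q$ even, Parseval in $d$ converts the known $\ell_2$-norm of $Q_c$ (namely $2$ or $1$, via part (2)) directly into properties (3) and (4). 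I recommend adopting that route; otherwise you face a genuinely delicate exponential-sum evaluation that your outline does not yet control.
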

\begin{proof}
To prove the first identity observe that $S(x\sigma(t))=S(xt)$ and $\pi(t)=\overline{\pi(\sigma(t))}$ which gives
\begin{displaymath}
J_{\overline{\pi}}(x)=\sum_{t\in C}\chi(S(xt))\overline{\pi(t)}/q=\sum_{t\in C}\chi(S(x\sigma(t)))\overline{\pi(\sigma(t))}/q=\sum_{t\in C}\chi(S(xt))\pi(t)/q=J_\pi(x)\\
\end{displaymath}
and on the other hand
\begin{displaymath}
J_{\overline{\pi}}(x)=\sum_{t\in C}\chi(S(xt))\overline{\pi(t)}/q=\sum_{t\in C}\overline{\chi(S(xt))\pi(t)}/q=\overline{J_\pi(x)},
\end{displaymath}
for all $x\in\mathbb{F}_q^*$. The second identity is obvious since the matrices $T_{g,\pi}$ are orthogonal. For the third identity, fix  {
$
\arraycolsep=1pt\def\arraystretch{1}g_1=\left[
\scriptsize
\begin{array}{cc}
0&1\\
1&0
\end{array}
\right]
$
} and set $
\arraycolsep=1pt\def\arraystretch{1}g_2(d)=\left[
\scriptsize
\begin{array}{cc}
0&1\\
1&d
\end{array}
\right]
$ for $d\in\mathbb{F}_q$. Now $
\arraycolsep=1pt\def\arraystretch{1}g_2(d)g_1=g_3(d)=\left[
\scriptsize
\begin{array}{cc}
1&0\\
d&1
\end{array}
\right]
$, $T_{g_2(d),\pi}T_{g_1,\pi}=T_{g_3(d),\pi}$ and explicitly
\begin{displaymath}
Q(d):=T_{g_3(d),\pi}(1,1)=\chi(dN(0))J_\pi^2(0)+\sum_{x\in\mathbb{F}_q^*}J_\pi(x)^2\chi(dN(x))\textrm{ for all }d\in\mathbb{F}_q.
\end{displaymath}
Since $N(x)=x^2$ for all $x\in\mathbb{F}_q$, the functions $\chi(dN(t))/\sqrt{q}$ form an orthonormal basis of $H(\mathbb{F}_q)$ and therefore (by Parseval's theorem) $\vert\vert Q\vert\vert_{H(\mathbb{F}_q)}^2=q\vert\vert J_\pi^2\vert\vert_2^2$. On the other hand, the values of the function \textcolor{green}{$Q$} are known:
\begin{displaymath}
Q(d)=T_{g_3(d),\pi}(1,1)=\begin{cases}
-\chi(N(1)/d)^2 J_\pi(1/d)&\text{if }d\neq0\\
1&\text{if }d=0.
\end{cases}
\end{displaymath}
for all $d\in\mathbb{F}$. We conclude that $\vert\vert Q\vert\vert_{H(\mathbb{F}_q)}^2=1^2+\vert\vert J_\pi \vert\vert_2^2=2$ and thus $\vert\vert J_\pi^2\vert\vert_2^2=\frac{2}{q}$.

The fourth identity can be proved in the same way as the third identity by choosing  {
$
\arraycolsep=1pt\def\arraystretch{1}g_1(c)=\left[
\scriptsize
\begin{array}{cc}
0&c\\
c^{-1}&0
\end{array}
\right]
$
} with any fixed $c\neq 0,1$ to get for all $d\in\mathbb{F}_q$
\begin{displaymath}
Q_c(d):=T_{g_3(c,d),\pi}(1,1)=\chi(dN(0))J_\pi(0)J_\pi(c\cdot0)+\sum_{y\in\mathbb{F}_q^*}J_\pi(x)J_\pi(cx)\chi(dN(x))
\end{displaymath}
where  $g_3(c,d)=g_2(d)g_1(c)$. Explicitly
\begin{displaymath}
Q_c(d)=T_{g_3(c,d),\pi}(1,1)=\begin{cases}
-\chi((c^2+1)N(1)/d) J_\pi(c/d)&\text{if }d\neq0\\
0&\text{if }d=0.
\end{cases}
\end{displaymath}
Since $|\chi(y)|=1$ for all $y\in\mathbb{F}_q$, we have  $\vert\vert Q_c\vert\vert_2^2=0^2+\vert\vert J_\pi \vert\vert_2^2=1$ and thus $\vert\vert J_\pi J_{\pi,c}\vert\vert_2^2=\frac{1}{q}$.
\end{proof}

Now it is time to define line systems $\mathcal{L}_\pi$ in the case when $q$ is even by giving explicitly corresponding sets of representatives. 
\begin{definition}\label{LSyst}
The line system where each line is represented by a vector in the set
\begin{displaymath}
\Phi_\pi=\{\chi_aJ_{\pi,y}\mid a\in\mathbb{F}_q,y\in\mathbb{F}_q^*\}\cup\{\delta_y\mid y\in\mathbb{F}_q^*\}\subset H(\mathbb{F}_q^*).
\end{displaymath}
is denoted by $\mathcal{L}_\pi$.
\end{definition}
Each vector in $\Phi_\pi$ is a row vector in some matrix  $T_{g,\pi}$ where $g\in G$. On the other hand, any row vector of any matrix $T_{g,\pi}$ is parallel with some vector of the set $\Phi_\pi$.
\begin{proposition}
\label{CodeSize1}
If $q$ is even, $|\Phi_\pi|=q^2-1$ for all nontrivial characters $\pi$ and  $q>2$. All vectors in $\Phi_\pi$ are real and non-parallel.
\end{proposition}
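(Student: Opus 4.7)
Reality of the vectors in $\Phi_\pi$ follows from $\chi(\mathbb{F}_q)\subset\{\pm 1\}$ together with Proposition \ref{Norm}(1), which gives $J_\pi\in\mathbb{R}$. Each vector in $\Phi_\pi$ has unit norm: the delta vectors obviously, and $\|\chi(aN(\cdot))J_\pi(y\,\cdot)\|_2^2=\|J_\pi\|_2^2=1$ by Proposition \ref{Norm}(2) after the change of variable $z=yx$. The parametrization furnishes at most $q(q-1)+(q-1)=q^2-1$ vectors, so it suffices to show no two of them are parallel; distinctness (and hence the stated cardinality) then follows automatically.

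The key pointwise bound is $|J_\pi(z)|<1$ for every $z\in\mathbb{F}_q^*$ when $q>2$. Indeed, if $|J_\pi(z_0)|=1$, then Proposition \ref{Norm}(2) forces $J_\pi$ to vanish off $z_0$, whence $\|J_\pi^2\|_2^2=1$; but Proposition \ref{Norm}(3) gives $2/q<1$, a contradiction.

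With this bound the case analysis is short. Parallelism between two delta vectors is ruled out by inspection, and Type A versus Type B immediately forces $J_\pi$ to be supported at a single point with value $\pm 1$, contradicting the pointwise bound. For two Type A vectors, the equation $\chi(a_1N(x))J_\pi(y_1x)=\varepsilon\chi(a_2N(x))J_\pi(y_2x)$, with $\varepsilon=\pm 1$, rearranges (since $\chi(\mathbb{F}_q)\subset\{\pm 1\}$) into
\[
J_\pi(y_1x)=\varepsilon\,\chi(bN(x))\,J_\pi(y_2x),\qquad b=a_1+a_2.
\]
Multiplying this identity by $J_\pi(y_2x)$ and comparing squared $\ell^2$-norms yields $\|J_\pi(y_1x)J_\pi(y_2x)\|_2^2=\|J_\pi^2\|_2^2=2/q$. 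If $y_1\neq y_2$, the change $z=y_2x$ and Proposition \ref{Norm}(4) (applicable because $c=y_1/y_2\in\mathbb{F}_q^*\setminus\{1\}$) make the left-hand side equal $1/q$, a contradiction. Hence $y_1=y_2=y$; multiplying the rearranged identity by $J_\pi(yx)$ and summing in $x$ then gives $\sum_x\chi(bN(x))J_\pi(yx)^2=\varepsilon$, which after $z=yx$ becomes $Q(b/y^2)=-J_\pi(y^2/b)$ by the auxiliary identity established inside the proof of Proposition \ref{Norm}(3). This would force $|J_\pi(y^2/b)|=1$, again contradicting the pointwise bound unless $b=0$, i.e.\ $a_1=a_2$.

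The main obstacle is the subcase $y_1=y_2$, $a_1\neq a_2$: the two-norm argument that separated distinct $y$'s is silent here. The way through is to recycle the intermediate quadratic character sum $Q(d)=\sum_x\chi(dN(x))J_\pi(x)^2=-J_\pi(1/d)$ that was derived (but not isolated as a statement) inside the proof of Proposition \ref{Norm}(3); combined with the pointwise bound $|J_\pi|<1$, it closes the last case cleanly.
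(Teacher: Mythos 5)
Your proof is correct, but it takes a genuinely different route from the paper's. The paper embeds the circle $C$ as a cyclic subgroup $\{g_z\mid z\in C\}$ of $\SL(2,\mathbb{F}_q)$, observes that the $q^2-1$ vectors of $\Phi_\pi$ are, up to sign, exactly the rows of the $q+1$ matrices $T_{g_{z_0}^r,\pi}$, and then invokes the group property: the inner product of a row of $T_{g_{z_0}^r,\pi}$ with a row of $T_{g_{z_0}^t,\pi}$ is an entry of $T_{g_{z_0}^{r-t},\pi}$, which for $r\neq t$ is never monomial (because $C\cap\mathbb{F}_q^*=\{1\}$) and hence has all entries of absolute value $|J_\pi(x)|\leq\sqrt[4]{2/q}<1$. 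You instead run a direct case analysis on the parametrization $(a,y)$, ruling out parallelism with the $\ell^2$-identities of Proposition \ref{Norm} (parts 2--4) together with the intermediate identity $Q(d)=-J_\pi(1/d)$ extracted from the proof of part 3; the common engine in both arguments is the strict pointwise bound $|J_\pi|<1$. Your version is more elementary and makes explicit a point the paper leaves implicit (that the rows of the circle-subgroup matrices exhaust $\Phi_\pi$ up to sign), at the cost of a longer computation; the paper's version is shorter because the group property packages all the required inner-product evaluations at once --- indeed your identity $Q(d)=-J_\pi(1/d)$ is precisely the group property applied to the product $g_2(d)g_1$. Both arguments are complete and valid.
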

\begin{proof}
The set $\Phi_\pi$ contains $q-1$ vectors of the form $\delta_y$ as well as at most $q(q-1)$ vectors of the form $\chi_aJ_{\pi,y}$. All these vectors are real according to part 1 of Proposition \ref{Norm}. The cardinality of $\Phi_\pi$ is thus at most $q^2-1$. It remains to show that cardinality of $\Phi_\pi$ is at least $q^2-1$ by showing that there are no parallel vectors in the description of the set $\Phi_\pi$. 

The multiplicative group $\mathbb{L}^*$ is a subgroup of $\GL(2,\mathbb{F}_q)$ where every element $z=b+c\rho\in\mathbb{L}^*$ is represented by the invertible matrix $g_z\in\GL(2,\mathbb{F}_q)$ where {
$
\arraycolsep=1pt\def\arraystretch{1}g_z=\left[
\scriptsize
\begin{array}{cc}
b&sc\\
c\textrm{ }&b+c
\end{array}
\right]
$
} and $s\in\mathbb{F}_q$ as in Section 2. Matrix $g_z$ is upper triangular exactly when $c=0$, in this case $g_z$ is a scalar matrix and hence $z\in\mathbb{F}_q^*\subset\mathbb{L}^*$. Matrices associated with elements $z\in C$ have $\Det(g_z)=N(z)=1$ and thus form a cyclic subgroup of $\SL(2,\mathbb{F}_q)$. Since $C\cap\mathbb{F}_q^*=\{1\}$, the only upper triangular matrix corresponding to elements of $C$ is the identity matrix. 

Let $z_0$ be a generator of $C$. From the group property of $C$ follows
\begin{displaymath}
\langle T_{g_{z_0},\pi}^r(u,:),T_{g_{z_0},\pi}^t(v,:)\rangle=T_{g_{z_0},\pi}^{r-t}(u,v) \textrm{ for all } 0\leq r,t<q+1.
\end{displaymath}
If $r=t$, then $T_{g_{z_0},\pi}^{r-t}$ is an identity matrix. In all other cases matrices $g_{z_0}^{r-t}$ are not upper triangular since $C\cap\mathbb{F}_q^*=\{1\}$. Thus, the absolute values of entries of matrices $T_{g_{z_0},\pi}^{r-t}$ are thus contained in the set $\{|J_\pi(x)|\mid x\in\mathbb{F}_q^*\}$. From part 3 of Proposition \ref{Norm} follows that $|J_\pi(x)|\leq \sqrt[4]{\frac{2}{q}}<1$ for all $x\in\mathbb{F}_q^*$. This can be used to deduce that all row vectors $T_{g_{z_0}^r,\pi}$, $0\leq r<q+1$ are on different lines. Thus, one of the vectors in the each pair $\{T_{g_{z_0^r},\pi}(v,:),-T_{g_{z_0^r},\pi}(v,:)\}$, $0\leq r<q+1, v\in\mathbb{F}_q^*$ is contained in the set $\Phi_\pi$,  proving that  $|\Phi_\pi|=q^2-1$.
\end{proof}

Our next goal is to determine a lower bound for the coherence of the line packings $\mathcal{L}_\pi$.
\begin{proposition}\label{BC}
Let $q$ be even. For all nontrivial characters $\pi$ of the circle $C$, $\mu_{\mathcal{L}_\pi}\geq\sqrt{2/q}$ holds. Equality can only be achieved if $\mathcal{L}_\pi$ is $\{0,\sqrt{2/q}\}$-angular.
\end{proposition}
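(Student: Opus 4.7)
The plan is to exploit the standard basis vectors $\delta_{y_0}$ already present in $\Phi_\pi$. Writing $v_{a,y}:=\chi(aN(\cdot))J_\pi(y\,\cdot)$, one has $|\langle v_{a,y},\delta_{y_0}\rangle|=|J_\pi(yy_0)|$, and as $y,y_0$ range over $\mathbb{F}_q^*$ the product $yy_0$ covers all of $\mathbb{F}_q^*$. Hence $\mu_{\mathcal{L}_\pi}\geq \|J_\pi\|_\infty$, so the lower bound reduces to showing $\|J_\pi\|_\infty^2\geq 2/q$.

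This last inequality is the trivial H\"older-type estimate $\|J_\pi\|_4^4 \leq \|J_\pi\|_\infty^2\,\|J_\pi\|_2^2$. Parts 2 and 3 of Proposition \ref{Norm} give $\|J_\pi\|_2^2 = 1$ and $\|J_\pi\|_4^4=\|J_\pi^2\|_2^2 = 2/q$, yielding $\|J_\pi\|_\infty^2 \geq 2/q$ and therefore $\mu_{\mathcal{L}_\pi}\geq \sqrt{2/q}$.

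For the equality clause, assume $\mu_{\mathcal{L}_\pi}=\sqrt{2/q}$. The same pairing $(v_{a,y},\delta_{y_0})$ then forces $|J_\pi(z)|^2\leq 2/q$ for every $z\in\mathbb{F}_q^*$, so the H\"older bound above is saturated, which in turn forces $J_\pi(z)^2\in\{0,2/q\}$ for every $z$. In particular every inner product of the form $\langle v_{a,y},\delta_{y_0}\rangle$ lies in $\{0,\sqrt{2/q}\}$, and the $\delta_{y_0}$ are mutually orthogonal. It remains to control the inner products $\langle v_{a_1,y_1},v_{a_2,y_2}\rangle$. Each such vector coincides, up to sign, with a row of an orthogonal matrix $T_{g_i,\pi}$ with $g_i\notin B$, so by the group property $\langle v_{a_1,y_1},v_{a_2,y_2}\rangle=\pm T_{g_1 g_2^{-1},\pi}(v_1,v_2)$. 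When $g_1g_2^{-1}\notin B$, formula \eqref{Brepr} exhibits this entry as $\chi(\cdot)J_\pi(z)$ for some $z\in\mathbb{F}_q^*$, hence of modulus in $\{0,\sqrt{2/q}\}$. When $g_1g_2^{-1}\in B$ the matrix is monomial with nonzero entries of modulus one, but a modulus-one inner product between distinct vectors of $\Phi_\pi$ would contradict the non-parallelism proved in Proposition \ref{CodeSize1}, so the entry must vanish.

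The hard part, I expect, is not the one-line H\"older bound itself but the biangular verification in the equality case: one has to realise every element of $\Phi_\pi$ as a row of some $T_{g,\pi}$ and then trace each cross inner product through the group multiplication to reduce either to a single pointwise value of $J_\pi$ or to a monomial entry controlled by Proposition \ref{CodeSize1}.
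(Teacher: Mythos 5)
Your proof is correct and takes essentially the same route as the paper: the identical H\"older estimate combining parts 2 and 3 of Proposition \ref{Norm} yields $\Vert J_\pi\Vert_\infty^2\geq 2/q$, and saturation forces $J_\pi(x)\in\{0,\pm\sqrt{2/q}\}$, hence biangularity. The only difference is that you spell out why every cross inner product in $\Phi_\pi$ reduces either to a value of $J_\pi$ or to a vanishing monomial entry, a step the paper absorbs into its appeal to Proposition \ref{CodeSize1}.
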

\begin{proof}
From Proposition \ref{CodeSize1} follows that the angle set of $\mathcal{L}_\pi$ is $\mathcal{A}_\mathcal{L}=\{|J_\pi(x)|\mid x\in\mathbb{F}_q^*\}$. Absolute values of the functions $J_\pi$ can be estimated with the help of Proposition \ref{Norm}. Indeed, parts 2 and 3 of Proposition \ref{Norm} imply that the maximal absolute value must be at least $\sqrt{2/q}$ since from the H\"{o}lder inequality follows
\begin{displaymath}
2/q=\vert\vert J_\pi^4\vert\vert_1\leq\vert\vert J_\pi^2\vert\vert_1\vert\vert J_\pi^2\vert\vert_\infty=\vert\vert J_\pi^2\vert\vert_\infty.
\end{displaymath}
On the other hand, if the maximal absolute value of the function $J_\pi$ is exactly $\sqrt{2/q}$ then coordinatewise 
\begin{displaymath}
J_\pi(x)^4\leq2J_\pi(x)^2/q \textrm{ for all } x\in\mathbb{F}_q^*
\end{displaymath}
and equality occurs only if $J_\pi(x)\in\{0,\pm\sqrt{2/q}\}$ for all $x\in\mathbb{F}_q^*$. In this case $\mathcal{A}_\mathcal{L}=\{0,\sqrt{2/q}\}$ and the line system $\mathcal{L}_\pi$ is biangular.
\end{proof}
The main goal of this section is to show that there actually exists a nontrivial character $\pi$ of the group $C$ such that coherence of $\mathcal{L}_\pi$ achieves equality in Proposition \ref{BC} and in the Levenshteins second bound. If $q=2^{2k+1}$ then
\begin{equation}\label{res}
2^{2k+1}\equiv (-1)^{2k+1}=-1 \textrm{ mod } 3 
\end{equation}
and thus $q+1$ is always divisible by 3. Since the character group of $C$ is isomorphic to $C$, there exist exactly three characters $\pi$ such that $\pi^3=\pi_0$. One of these characters is obviously $\pi_0$ which defines the $q$-dimensional representation. The other two characters are of order 3. When one is $\pi$, other is always $\overline{\pi}$.  These two characters will define the same line system $\mathcal{L}_{\overline{\pi}}=\mathcal{L}_\pi$ and the same Bessel function $J_{\overline{\pi}}=J_\pi$ on $\mathbb{F}_q^*$.

We will show that characters of order 3 always produce line systems $\mathcal{L}_\pi$ with optimal properties. To prove optimality of these line systems, Gauss sums will be introduced in Definition \ref{Gauss} and some of their properties will be introduced in Propositions \ref{Gauss:triv} and \ref{HD}.
\begin{definition}\label{Gauss}
A Gauss sum is a sum of the form 
\begin{displaymath}
G(\chi,\pi)=\sum_{x\in\mathbb{F}_q^*}\chi(x)\pi(x)
\end{displaymath}
where $\chi$ is an additive character of the field $\mathbb{F}_q$ and $\pi$ is a multiplicative character of the field $\mathbb{F}_q$.  
\end{definition}
\begin{proposition}\label{Gauss:triv}
For a nontrivial additive character $\chi$, Gauss sums have the following properties
\begin{enumerate}
\item $|G(\chi,\pi)|=\sqrt{q}$ for $\pi$ nontrivial,
\item $G(\chi,\pi)=-1$ for $\pi$ trivial,
\item $G(\chi,\overline{\pi})=\pi(-1)\overline{G(\chi,\pi)}.$
\end{enumerate}
\end{proposition}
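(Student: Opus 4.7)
The plan is to dispatch the three claims in order, each by a short character-orthogonality argument; no deeper machinery is needed.

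Part (2) is essentially a one-liner. When $\pi$ is trivial, $G(\chi,\pi)$ collapses to $\sum_{x \in \mathbb{F}_q^*} \chi(x)$. Since $\chi$ is nontrivial, $\sum_{x \in \mathbb{F}_q} \chi(x) = 0$ by additive-character orthogonality, and removing $\chi(0) = 1$ leaves $-1$.

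For part (3), I would conjugate and then substitute $x \mapsto -x$. Writing $\overline{G(\chi,\pi)} = \sum_{x \in \mathbb{F}_q^*} \overline{\chi(x)}\,\overline{\pi(x)}$ and using $\overline{\chi(x)} = \chi(-x)$ (since $\chi$ is unimodular), the change of variables pulls out a factor $\overline{\pi}(-1)$ and turns the resulting sum into $G(\chi,\overline{\pi})$. Since $\pi(-1)^2 = \pi(1) = 1$, we have $\pi(-1) \in \{\pm 1\}$ and $\overline{\pi}(-1) = \pi(-1)$, so the identity rearranges into the claimed form.

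Part (1) is the only step with real content. I would apply the standard double-sum trick:
\[
|G(\chi,\pi)|^2 \;=\; \sum_{x,y \in \mathbb{F}_q^*} \chi(x-y)\,\pi(x)\,\overline{\pi}(y),
\]
and substitute $z = xy^{-1}$, so that $x = yz$, $x - y = y(z-1)$, and $\pi(x)\overline{\pi}(y) = \pi(z)$. This transforms the sum into $\sum_{y,z \in \mathbb{F}_q^*} \chi(y(z-1))\,\pi(z)$. The inner sum over $y$ equals $q-1$ when $z = 1$ and $-1$ when $z \neq 1$ (again by additive orthogonality of $\chi$). What remains is $(q-1) - \sum_{z \neq 1}\pi(z) = (q-1) - (-1) = q$, where the last equality uses the nontriviality of $\pi$ to give $\sum_{z \in \mathbb{F}_q^*}\pi(z) = 0$ and hence $\sum_{z \neq 1}\pi(z) = -1$.

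The only real care required is in part (1): picking the multiplicative substitution $z = xy^{-1}$ (rather than a difference) so that $\pi$ and $\overline{\pi}$ combine cleanly, and tracking exactly where the nontriviality of each character is used. Everything else is routine bookkeeping.
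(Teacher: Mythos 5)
Your proof is correct. The paper itself gives no argument here --- it simply remarks that these properties of Gauss sums are well known --- so there is nothing to compare against; your three computations (additive orthogonality for part (2), conjugation plus the substitution $x \mapsto -x$ for part (3), and the double-sum with the multiplicative change of variables $z = xy^{-1}$ for part (1)) are exactly the standard proofs and all the steps check out.
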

\noindent The properties of Gauss sums in Proposition \ref{Gauss:triv} are well known.
\begin{proposition}[Hasse--Davenport relations, \cite{H-D}]\label{HD}
Let $\mathbb{F}_q$ be a finite field with $q$ elements and let $\mathbb{L}/\mathbb{F}_q$ be its extension of degree $s$. Let $\chi,\rho$  be (nontrivial) additive and multiplicative characters of the field $\mathbb{F}_q$, respectively. Let $N,S$ be the norm and the trace from $\mathbb{L}$ to $\mathbb{F}_q$, respectively. Let $\chi'(z)=\chi(S(z))$ and $\rho'(z)=\rho(N(z))$ be additive and multiplicative characters of the field $\mathbb{L}$. Then 
\begin{displaymath}
(-1)^sG(\chi,\rho)^s=-G(\chi',\rho') \textrm{ (Hasse--Davenport lifting relation)}.
\end{displaymath}
Let $\pi$ be a multiplicative character of the field $\mathbb{F}_q$ of order $m$ dividing $q-1$. Then
\begin{displaymath}
\prod_{a=0}^{m-1} G(\chi,\rho\pi^a)=-\rho^{-m}(m)G(\chi,\rho^m)\prod_{a=0}^{m-1}G(\chi,\pi^a) \textrm{ (Hasse--Davenport product relation)}.
\end{displaymath}
\end{proposition}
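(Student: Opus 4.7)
The plan is to prove the two Hasse--Davenport relations separately, since they are logically independent statements. Both proofs use classical techniques from the theory of character sums over finite fields, and I would follow the standard L-function / generating function method.

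For the lifting relation, define for each monic polynomial $f(x) = x^n - c_1 x^{n-1} + \ldots + (-1)^n c_n \in \mathbb{F}_q[x]$ the quantity $\lambda(f) = \chi(c_1)\rho(c_n)$ --- that is, $\chi$ applied to the trace and $\rho$ applied to the norm of the roots of $f$ --- and extend $\lambda$ multiplicatively to all monic polynomials. Consider the generating series $L(T) = \sum_f \lambda(f) T^{\deg f}$. It can be evaluated in two ways. First, by grouping monic polynomials of fixed degree $n$: the sum over the $n-2$ middle coefficients contributes a factor $q^{n-2}$, while the sum over $c_1, c_n$ factors as Gauss sums and produces an explicit rational expression in $G(\chi, \rho)$. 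Second, by unique factorisation, $L(T) = \prod_P (1 - \lambda(P) T^{\deg P})^{-1}$ over monic irreducibles $P$. A monic irreducible $P$ of degree $d$ corresponds to a Galois orbit of some $\alpha \in \mathbb{F}_{q^d}$, with $\lambda(P) = \chi(\mathrm{Tr}(\alpha)) \rho(\mathrm{N}(\alpha)) = \chi'(\alpha) \rho'(\alpha)$ when restricted to the degree-$d$ extension. Taking $\log$ of $L(T)$, extracting the coefficient of $T^s$, and specialising to $\mathbb{L} = \mathbb{F}_{q^s}$ ties that coefficient to $G(\chi', \rho')$; the sign $(-1)^s$ appears through Newton's identities when passing between elementary symmetric polynomials and power sums.

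For the product relation I would expand the product of Gauss sums directly:
\begin{equation*}
\prod_{a=0}^{m-1} G(\chi, \rho \pi^a) = \sum_{x_0, \ldots, x_{m-1} \in \mathbb{F}_q^*} \chi\!\left(\sum_a x_a\right) \rho\!\left(\prod_a x_a\right) \prod_a \pi^a(x_a).
\end{equation*}
The central trick is to fix the product $y = x_0 x_1 \cdots x_{m-1}$ and apply the character-sum identity $\sum_{a=0}^{m-1} \pi^a(t) = m$ when $t \in (\mathbb{F}_q^*)^m$ and $0$ otherwise, which detects the $m$-th power condition among suitably "gauge-transformed" variables. After a careful rescaling, this collapses the sum into $G(\chi, \rho^m) \prod_{a} G(\chi, \pi^a)$ times the scalar $-\rho^{-m}(m)$, which arises from normalising one of the variables and from the boundary contributions at degenerate configurations.

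The hardest part in both arguments is bookkeeping rather than ideation. For the lifting relation, one must track how Galois orbits in $\mathbb{F}_{q^n}$ assemble into the Euler product: the identity $\log L(T) = \sum_{n \geq 1} \frac{T^n}{n} \sum_{\alpha \in \mathbb{F}_{q^n}^*} \chi'(\alpha) \rho'(\alpha)$ extracted from the Euler product must be matched coefficient by coefficient with the direct expansion. For the product relation, the genuinely delicate piece is the precise scalar $-\rho^{-m}(m)$: pinning down its form requires a careful change of variables together with inclusion--exclusion for the $x_a = 0$ terms. Once these details are handled, the relations follow from the standard character-sum manipulations.
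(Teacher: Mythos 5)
This proposition is not proved in the paper at all: it is quoted as a classical result with a citation to Davenport--Hasse \cite{H-D}, so there is no in-paper argument to compare yours against. Judged on its own, your sketch splits into two halves of very different quality. The lifting-relation half is the standard Weil $L$-function argument and is essentially sound: with $\lambda(f)=\chi(c_1)\rho(c_n)$ extended multiplicatively, the degree-$n$ coefficient of $L(T)$ vanishes for every $n\geq 2$ (because $\sum_{c_1}\chi(c_1)=0$ once a free middle coefficient is integrated out), so $L(T)=1+G(\chi,\rho)T$ exactly -- it is a degree-one polynomial, not merely ``an explicit rational expression.'' Taking logarithms of the Euler product and using transitivity of norm and trace to convert the sum over Galois orbits of degree $d\mid s$ into $\sum_{\alpha\in\mathbb{F}_{q^s}^*}\chi'(\alpha)\rho'(\alpha)$ then yields $-G(\chi',\rho')=(-G(\chi,\rho))^s$, which is the stated relation. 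This part would survive being written out in full.

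The product-relation half has a genuine gap. The ``central trick'' you invoke, namely $\sum_{a=0}^{m-1}\pi^a(t)=m$ for $t\in(\mathbb{F}_q^*)^m$ and $0$ otherwise, is an orthogonality relation for a \emph{sum} over $a$, whereas the expanded product contains the \emph{product} $\prod_a\pi^a(x_a)=\pi\bigl(\prod_a x_a^a\bigr)$ over $m$ independent variables; there is no configuration of the sum in which the orthogonality relation applies as stated, and ``after a careful rescaling, this collapses'' is an assertion of the conclusion rather than a derivation. In particular the constant $-\rho^{-m}(m)$ -- the finite-field shadow of the $m^{-mz}$ in the Gauss multiplication formula -- does not fall out of a rescaling of one variable plus boundary terms. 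The known elementary proofs proceed differently: one uses the identity $G(\chi,\lambda_1)G(\chi,\lambda_2)=J(\lambda_1,\lambda_2)G(\chi,\lambda_1\lambda_2)$ to show that $\rho(m)^{m}\prod_{a}G(\chi,\rho\pi^a)/G(\chi,\rho^m)$ is independent of $\rho$ (a Jacobi-sum counting argument comparing the number of solutions of $x^m=u$ in two ways), and then evaluates the constant at a convenient choice of $\rho$. Without that, or an equivalent mechanism, your argument for the second relation does not close.
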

\noindent Results of L. Carlitz concerning the evaluation of some additive character sums will prove to be useful. In fact, we will prove that when a character $\pi$ has degree 3, the values of the functions $J_\pi$ are directly related to  additive character sums presented in the next proposition.
\begin{proposition}[L. Carlitz, \cite{C}]
\label{Carlitz}
Let $\mathbb{F}_q$ be a field with $q=2^{2k+1}$ elements and $\chi$ its nontrivial additive character. Then
\begin{displaymath}
\sum_{x\in\mathbb{F}_q}\chi(ax^3+bx)\in\{0,\pm2^{k+1}\}
\end{displaymath}
while at least one of the parameters $a,b$ is nonzero.
\end{proposition}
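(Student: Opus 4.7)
The plan is to evaluate $|S(a,b)|^2$ directly, where $S(a,b):=\sum_{x\in\mathbb{F}_q}\chi(ax^3+bx)$, exploiting the clean arithmetic of characteristic two. If $a=0$ then $b\neq 0$, and the sum collapses to $\sum_{x}\chi(bx)=0$ by orthogonality of additive characters, so I would immediately restrict attention to $a\neq 0$.

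For $a\neq 0$, I would expand
\begin{displaymath}
|S(a,b)|^2=\sum_{x,y\in\mathbb{F}_q}\chi\bigl(a(x^3+y^3)+b(x+y)\bigr)
\end{displaymath}
and substitute $u=x+y$ (so $y=x+u$ in characteristic two). A short calculation yields $x^3+y^3=(x+y)(x^2+xy+y^2)=u(x^2+xu+u^2)=ux^2+u^2 x+u^3$, so the exponent becomes $aux^2+au^2 x+au^3+bu$ and the inner sum over $x$ equals $\chi(au^3+bu)\sum_x\chi(aux^2+au^2 x)$. The main step is then the Frobenius identity $\chi(z^2)=\chi(z)$, valid in characteristic two because $Tr(z^2)=Tr(z)$: rewriting $\chi(aux^2)=\chi((au)^{1/2}x)$ converts the inner sum into a linear character sum, which equals $q$ when $(au)^{1/2}+au^2=0$ and vanishes otherwise.

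Squaring the vanishing condition gives $au=a^2u^4$, equivalent to $u=0$ or $au^3=1$. Since $q=2^{2k+1}$, by \eqref{res} one has $q-1\equiv 1\pmod 3$, so cubing is a bijection of $\mathbb{F}_q^*$ and $au^3=1$ has the unique solution $u=a^{-1/3}$. Summing the two contributions,
\begin{displaymath}
|S(a,b)|^2=q\bigl(1+\chi(1+ba^{-1/3})\bigr)\in\{0,2q\},
\end{displaymath}
so $|S(a,b)|\in\{0,2^{k+1}\}$. Because $\chi$ is $\{\pm 1\}$-valued, $S(a,b)$ is itself a real integer, which upgrades the conclusion to $S(a,b)\in\{0,\pm 2^{k+1}\}$, as claimed. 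The only step requiring any genuine insight is the Frobenius collapse of the apparent quadratic Gauss sum to a linear one; without it one would be forced into a heavier Gauss-sum and Hasse--Davenport style argument that is considerably less transparent and would not directly produce the sharp three-valued conclusion.
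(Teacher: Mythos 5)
Your proof is correct. Note, however, that the paper offers no proof of this proposition at all: it is quoted as a known result of Carlitz \cite{C}, so there is nothing in the paper to compare your argument against. Your argument is a clean, self-contained replacement for the citation. The standard Weil-type machinery would only give $|S(a,b)|\leq 2\sqrt{q}$, whereas your squaring trick pins down the exact value set: after the substitution $y=x+u$ the Frobenius identity $Tr(z^2)=Tr(z)$ collapses the inner quadratic sum to a linear one, the vanishing condition $(au)^{1/2}=au^2$ has exactly the two solutions $u=0$ and $u=a^{-1/3}$ (cubing being a bijection since $\gcd(3,q-1)=1$ for $q=2^{2k+1}$, which is exactly where the hypothesis on the exponent enters), and $|S(a,b)|^2=q\bigl(1+\chi(1+ba^{-1/3})\bigr)\in\{0,2q\}$ follows. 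Combined with the observation that $S(a,b)$ is a rational integer because $\chi$ is $\{\pm1\}$-valued, this yields precisely the three-valued conclusion $\{0,\pm 2^{k+1}\}$. Two minor remarks: you should say explicitly that $\chi$ is assumed nontrivial (otherwise the sum is $q$), and that reducing to the canonical character $\chi(x)=(-1)^{Tr(x)}$ is harmless since any other nontrivial additive character only rescales $a$ and $b$. As a bonus, your computation even identifies when the sum vanishes (according to whether $Tr(1+ba^{-1/3})=1$), which is more than the proposition asks for, though it does not determine the sign of the nonzero values as Carlitz's original analysis does.
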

\noindent Finally, two simple auxiliary lemmas will be needed.
\begin{lemma}
\label{lemma1}
Let $r\in\{1,2,3,\ldots\}$ and $n$ odd with $gcd(r,n)=1$. Let $f:\mathbb{Z}/n\mathbb{Z}\rightarrow\mathbb{C}$ with  be a function with the following two properties
\begin{enumerate}
\item $|f(k)|=r$ for all $k\in\mathbb{Z}/n\mathbb{Z}$.
\item $f(k)=\sum_{0\leq j\leq n-1}a_j e^{2\pi ijk/n}$ with integral coefficients $a_j$ for all $0\leq j\leq n-1$.
\end{enumerate}
Let $s:\mathbb{Z}/n\mathbb{Z}\rightarrow \{-1,1\}$ be any function. If the function $s(k)f(k):\mathbb{Z}/n\mathbb{Z}\rightarrow\mathbb{C}$ can be expressed in the form
\begin{equation}\label{eq:lemma}
s(k)f(k)=\sum_{0\leq j\leq n-1}b_j e^{2\pi ijk/n}
\end{equation}
with integral coefficients $b_j$ for all $0\leq j\leq n-1$, then the function $s(k)$ is necessarily a constant.
\end{lemma}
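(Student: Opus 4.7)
The plan is to exploit Fourier analysis on $\mathbb{Z}/n\mathbb{Z}$ via the algebraic identity
\begin{displaymath}
r^2 s(k) = s(k)|f(k)|^2 = (s(k)f(k))\overline{f(k)}.
\end{displaymath}
Since $f$ has integer Fourier coefficients $a_j$, its complex conjugate $\overline{f}$ has Fourier coefficients $a_{-j\bmod n}$, which are also integral. By hypothesis $s(k)f(k)$ has integer Fourier coefficients $b_j$, and the pointwise product of two functions with integer Fourier coefficients has integer Fourier coefficients (they are the cyclic convolution of the individual sequences). Hence there exist integers $d_j$ with
\begin{displaymath}
r^2 s(k)=\sum_{j=0}^{n-1}d_j e^{2\pi ijk/n}.
\end{displaymath}

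Writing $\omega=e^{2\pi i/n}$, the inverse DFT formula yields on one hand $n\hat{s}(j)=\sum_k s(k)\omega^{-jk}\in\mathbb{Z}[\omega]$, and on the other hand $n\hat{s}(j)=nd_j/r^2\in\mathbb{Q}$. Because $\mathbb{Z}[\omega]\cap\mathbb{Q}=\mathbb{Z}$, we obtain $nd_j/r^2\in\mathbb{Z}$; combined with $\gcd(r^2,n)=1$ this forces $r^2\mid d_j$, and therefore $\hat{s}(j)=d_j/r^2\in\mathbb{Z}$ for every $j$. This is precisely where the coprimality hypothesis $\gcd(r,n)=1$ enters the argument.

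To finish, Parseval gives $\sum_j\hat{s}(j)^2=\tfrac{1}{n}\sum_k s(k)^2=1$. Since the $\hat{s}(j)$ are integers, exactly one of them is nonzero and equals $\pm 1$, say $\hat{s}(j_0)=\pm 1$. Then $s(k)=\pm\omega^{j_0 k}$. Reality of $s$ forces $\omega^{j_0 k}\in\mathbb{R}$ for every $k\in\mathbb{Z}/n\mathbb{Z}$; since $n$ is odd the only real $n$-th root of unity is $1$, so $j_0\equiv 0\pmod n$ and $s(k)\equiv\pm 1$ is constant.

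The main conceptual step, and the only nontrivial obstacle, is the upgrade from ``$r^2 s$ has integer Fourier coefficients'' to ``$\hat{s}(j)$ itself is an integer'' via the algebraic-integer argument in $\mathbb{Z}[\omega]\cap\mathbb{Q}=\mathbb{Z}$ together with the coprimality assumption. Once this is in place, the remainder is a routine Parseval count combined with the elementary fact that odd cyclotomic fields contain no real roots of unity besides $1$.
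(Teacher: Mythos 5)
Your proof is correct, but it takes a genuinely different route from the paper's. The paper argues by contradiction on a single norm computation: assuming $s$ is nonconstant with $\mathcal{S}=s^{-1}(-1)$, it evaluates $\Vert(1-s)f\Vert^2$ pointwise as $4|\mathcal{S}|r^2$ and on the Fourier side as $n\sum_j(a_j-b_j)^2$, a multiple of $n$; since $\gcd(4r^2,n)=1$ (here both $n$ odd and $\gcd(r,n)=1$ are used) and $0<|\mathcal{S}|<n$, these cannot agree. Your argument instead isolates $s$ itself by multiplying $sf$ by $\overline{f}$, upgrades $\hat{s}$ to an integer-valued function via the convolution of integer coefficient sequences together with $\mathbb{Z}[\omega]\cap\mathbb{Q}=\mathbb{Z}$ and the coprimality hypothesis, and then concludes from Parseval that $s$ is $\pm$ a single character, which must be trivial because $n$ is odd. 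The paper's proof is shorter and entirely elementary (integer counting plus Parseval, no algebraic integers), while yours yields strictly more structural information --- it determines $\hat{s}$ completely rather than merely deriving a divisibility contradiction --- and it cleanly separates the roles of the two hypotheses: $\gcd(r,n)=1$ forces integrality of $\hat{s}$, and oddness of $n$ rules out the sign character. Both proofs are valid; either could replace the other.
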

\begin{proof}
Assume that there exist functions $f(k)$ and nonconstant $s(k)$ satisfying the requirements of Lemma \ref{lemma1}. Assume further that equation \eqref{eq:lemma} holds with integral coefficients $b_j$ for all $0\leq j\leq n-1$. Let $\mathcal{S}=\{k\in\mathbb{Z}/n\mathbb{Z}\mid s(k)=-1\}$; since $s(k)$ is not constant, $|\mathcal{S}|<n$. Now we determine the $H(\mathbb{Z}/n\mathbb{Z})$-norm of the function $g(k)=(1-s(k))f(k)$ in two different ways. Direct computation gives
\begin{displaymath}
||g||^2_{H(\mathbb{Z}/n\mathbb{Z})}=\sum_{0\leq k<n}|(1-s(k))f(k)|^2=4|\mathcal{S}|r^2.
\end{displaymath}
Alternatively, the same norm can be computed on the Fourier side
\begin{displaymath}
||g||^2_{H(\mathbb{Z}/n\mathbb{Z})}=n\sum_{0\leq k<n}(a_j-b_j)^2=n\cdot s,\textrm{ where } s\in\mathbb{N}.
\end{displaymath}
This leads to a contradiction since the first value is not divisible by $n$.
\end{proof}
\begin{lemma}\label{intJ}
Let $q$ be even and $\pi$ be a nontrivial character of the unit circle $C\subset\mathbb{L}$ of order $3$. Then $J_\pi(x)$ is rational of the form $k/q$ for all $x\in\mathbb{F}_q$. 
\end{lemma}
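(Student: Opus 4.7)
The plan is to observe that $qJ_\pi(x)$ is, almost by inspection, a sum whose summands lie in the ring of Eisenstein integers $\mathbb{Z}[\omega]$ (with $\omega=e^{2\pi i/3}$), and then to invoke Proposition \ref{Norm} to pin it down to the real subring $\mathbb{Z}$.

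First, I would dispose of the boundary case $x=0$ separately, since it is outside the scope of Proposition \ref{Norm}(1). As $\pi$ is a nontrivial character of the finite abelian group $C$, orthogonality of characters gives $\sum_{t\in C}\pi(t)=0$, hence $J_\pi(0)=0=0/q$, which is already of the required form.

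For $x\in\mathbb{F}_q^*$, I would start from the defining expression
$$qJ_\pi(x)=\sum_{t\in C}\chi(-S(tx))\pi(t).$$
Because $q$ is even, the formula $\chi(y)=(-1)^{Tr(y)}$ forces $\chi(y)\in\{\pm 1\}$; because $\pi$ has order exactly $3$, every value $\pi(t)$ lies in $\{1,\omega,\omega^2\}$. Each summand therefore belongs to $\mathbb{Z}[\omega]$, and so does the whole sum. Combining this with part 1 of Proposition \ref{Norm}, which guarantees $J_\pi(x)\in\mathbb{R}$, I obtain $qJ_\pi(x)\in\mathbb{Z}[\omega]\cap\mathbb{R}$.

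The proof finishes with the elementary identity $\mathbb{Z}[\omega]\cap\mathbb{R}=\mathbb{Z}$: any Eisenstein integer $a+b\omega$ with $a,b\in\mathbb{Z}$ has imaginary part $b\sqrt{3}/2$, which vanishes only when $b=0$. Hence $qJ_\pi(x)\in\mathbb{Z}$, i.e.\ $J_\pi(x)=k/q$ for some integer $k$. There is no real obstacle in this argument; the lemma is a short confluence of three observations already available — the cube-root-of-unity values of $\pi$, the $\pm 1$-values of $\chi$ in characteristic $2$, and the reality of $J_\pi$ on $\mathbb{F}_q^*$ coming from $J_\pi=J_{\overline\pi}$. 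The only bookkeeping subtlety is treating $x=0$ by hand, which is immediate by character orthogonality.
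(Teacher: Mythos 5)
Your proof is correct and rests on the same three ingredients as the paper's: $\chi$ takes values $\pm1$ in characteristic $2$, $\pi$ takes values in the cube roots of unity, and $J_\pi$ is real by part 1 of Proposition \ref{Norm}. The only difference is packaging: the paper pairs each $t\in C$ with $\sigma(t)=t^{-1}$ so that each pair contributes $\chi(S(xt))\bigl(\pi(t)+\overline{\pi(t)}\bigr)\in\{\pm2,\mp1\}$ and integrality and reality drop out simultaneously, whereas you get $qJ_\pi(x)\in\mathbb{Z}[\omega]$ by inspection and then invoke $\mathbb{Z}[\omega]\cap\mathbb{R}=\mathbb{Z}$; your separate treatment of $x=0$ via character orthogonality is a point of care the paper glosses over.
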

\begin{proof}
Since the character $\pi$ is of order 3, $\pi(t)\in\{1,e^{2\pi i/3},e^{-2\pi i/3}\}$ for all $t\in C$. Additionally $\chi(S(xt))=\chi(S(x\sigma(t)))$ and $\pi(t)=\overline{\pi(\sigma(t))}$ for all $t\in C$, $x\in\mathbb{F}_q$. Also $\pi(1)=1$. For a generator $t_0$ of $C$ we have
\begin{displaymath}
J_\pi(x)=\sum_{t\in C}\chi(S(xt))\pi(t)/q=\chi(x)\pi(1)/q+\sum_{1\leq n\leq q/2}\chi(S(xt_0^n))(\pi(t_0^n)+\pi(t_0^{q+1-n}))/q.
\end{displaymath}
Every summand in the last sum is either $2/q$ or $-1/q$, which completes the proof.
\end{proof}
\noindent The following theorem is the main result of this paper.
\begin{theorem}\label{result}
Let $q=2^{2k+1}$ and $\pi$ be a nontrivial character of the unit circle $C\subset\mathbb{L}$ of order $3$. Then the line system $\mathcal{L}_\pi$ of Definition \ref{LSyst} is $(q-1)$-dimensional  and  $|\mathcal{L}_\pi|=q^2-1$. Additionally $\mathcal{L}_\pi$ is  biangular with $\mathcal{A}_{\mathcal{L}_\pi}=\{0,\sqrt{2/q}\}$.
\end{theorem}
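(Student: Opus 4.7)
The dimension claim is immediate from the construction, since $\Phi_\pi \subset H(\mathbb{F}_q^*)$ and $\dim H(\mathbb{F}_q^*) = q - 1$. The cardinality $|\mathcal{L}_\pi| = q^2 - 1$ is precisely Proposition \ref{CodeSize1}, applicable because $q = 2^{2k+1} > 2$ and $\pi$ is nontrivial. For the biangular claim with $\mathcal{A}_{\mathcal{L}_\pi} = \{0, \sqrt{2/q}\}$, Proposition \ref{BC} already supplies the lower bound $\mu_{\mathcal{L}_\pi} \geq \sqrt{2/q}$ together with the implication that equality forces biangularity with exactly the asserted angle set. The remaining task therefore reduces to the matching upper bound $|J_\pi(x)| \leq \sqrt{2/q} = 2^{-k}$ for every $x \in \mathbb{F}_q^*$, equivalently $|qJ_\pi(x)| \leq 2^{k+1}$.

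The threshold $2^{k+1}$ is exactly the magnitude bound supplied by Carlitz (Proposition \ref{Carlitz}) on $\mathbb{F}_{2^{2k+1}}$, so the plan is to express $qJ_\pi(x)$ essentially as a cubic-plus-linear character sum $\sum_{y \in \mathbb{F}_q}\chi(a(x) y^3 + b(x) y)$ and then invoke Carlitz. The natural route goes through the Hasse--Davenport apparatus of Proposition \ref{HD}. Because $\gcd(q-1, q+1) = 1$ in characteristic 2 we have $\mathbb{L}^* \cong C \times \mathbb{F}_q^*$, and $\pi$ extends uniquely to a character $\tilde\pi$ of $\mathbb{L}^*$ of order $3$ that is trivial on $\mathbb{F}_q^*$. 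Since $3 \mid q^2 - 1$, the Hasse--Davenport product relation applies to $\tilde\pi$ over $\mathbb{L}$; crucially, because $3 = 1$ in characteristic $2$, the factor $\tilde\lambda^{-3}(3)$ appearing there is trivially $1$. Taking the multiplicative Fourier transform of $d \mapsto J_\pi(\sqrt d)$ on $\mathbb{F}_q^*$ against a character $\lambda$ produces coefficients $\hat F(\lambda)$ built from $G(\chi, \lambda^2)$ and the hybrid sum $K(\pi, \mu) = \sum_{t \in C}\pi(t)\mu(S(t))$ with $\mu = \lambda^{-2}$. The HD product formula, combined with the HD lifting relation $G(\chi', \mu \circ N) = -G(\chi, \mu)^2$, reduces $K(\pi, \lambda^{-2})$ to purely $\mathbb{F}_q$-Gauss sums $G(\chi, \lambda)$, $G(\chi, \lambda^3)$; inverting the Fourier transform then reassembles $qJ_\pi(x)$ as an additive character sum on $\mathbb{F}_q$ whose phase is cubic in the summation variable, matching Carlitz's form. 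Proposition \ref{Carlitz} then yields the bound.

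The main obstacle is this last identification step. The Hasse--Davenport product relation naturally couples the two Gauss sums $G(\chi', \tilde\lambda \tilde\pi)$ and $G(\chi', \tilde\lambda \tilde\pi^2)$ (relating their product to $G(\chi, \lambda^3)^2/G(\chi, \lambda)^2$) rather than producing either in isolation; disentangling them requires the complex-conjugation symmetry $\overline{G(\chi', \tilde\lambda \tilde\pi)} = G(\chi', \tilde\lambda^{-1}\tilde\pi^2)$, available in characteristic $2$, together with careful tracking of how the cubing map $\lambda \mapsto \lambda^3$ interacts with the inverse Fourier coefficients to yield the coefficients $a(x), b(x) \in \mathbb{F}_q$ explicitly in $x$ and the defining parameter $s$ of $\mathbb{L}$. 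Lemma \ref{intJ} serves as an integrality sanity check consistent with Carlitz's integer values $\{0, \pm 2^{k+1}\}$, and Lemma \ref{lemma1} stands ready to rule out any residual $\pm 1$ ambiguity: should the inversion first yield $s(x) \cdot \sum_y \chi(a(x)y^3 + b(x)y)$ for some sign function $s$ on the support of $J_\pi$, Lemma \ref{lemma1} forces $s$ to be constant so that the ambiguity can be absorbed into the coefficient data.
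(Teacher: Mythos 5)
Your proposal is correct and follows essentially the same route as the paper: reduce to evaluating $J_\pi$ via the multiplicative Fourier transform $f(r)=G(\chi\circ S,\pi_*\rho_*^r)=q\sum_x\rho^r(x)J_\pi(x)$, apply the Hasse--Davenport product and lifting relations (with $3=1$ in characteristic $2$ killing the $\rho^{-3r}(3)$ factor), identify the result with Carlitz's cubic sums after the substitution $y=tx^3$, and dispose of the residual sign with Lemma \ref{lemma1} and the integrality from Lemma \ref{intJ}. The only cosmetic difference is that the paper disentangles the two Gauss sums in the product relation by noting $G(\chi\circ S,\pi_*\rho_*^r)=G(\chi\circ S,\pi_*^2\rho_*^r)$ directly from $J_\pi=J_{\overline\pi}$ (Proposition \ref{Norm}.1), rather than via conjugation symmetry.
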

\begin{proof}
The fact that $\mathcal{L}_\pi$ is $(q-1)$-dimensional follows directly from Definition \ref{LSyst} while cardinality of $\mathcal{L}_\pi$ were determined in Proposition \ref{CodeSize1}. It remains is to determine the angle set of line systems $\mathcal{L}_\pi$.

Let $\hat{\mathbb{L}}$ be the group of multiplicative characters of the field $\mathbb{L}$ and $\psi:\mathbb{L}^*\rightarrow\hat{\mathbb{L}}$ be some fixed isomorphism. The group $\mathbb{L}^{*}$ is a direct product of two subgroups $C$ and $\mathbb{F}_q^{*}$ which have orders $q+1$ and $q-1$ respectively. The characters in $\psi(C)\leq\hat{\mathbb{L}}$ are constant on the lines $l_t=t\mathbb{F}_q^{*}$ where $t\in C$ while the characters in $\psi(\mathbb{F}_q^{*})\leq\hat{\mathbb{L}}$ are constant on the circles $C_d$ where $d\in\mathbb{F}_q^{*}$. A character $\pi^*$ of the group $C$ is a restriction of the corresponding character $\pi_{*}\in\psi(C)$. This correspondence is a bijection. 

Let $\xi$ be a generator of the multiplicative group $\mathbb{F}_q^*$ then the character
\begin{equation}\label{form}
\rho(\xi^k)=e^{2\pi ki/(q-1)} \textrm{ for all } 0\leq k<q-1
\end{equation}
generates all multiplicative characters of $\mathbb{F}_q^*$. In this case character $\rho_{*}=\rho\circ N$ generates $\psi(\mathbb{F}_p^*)$. From now on it will be assumed that  the character $\rho$ is defined according to \eqref{form} with some fixed $\xi$.

Define the function $f:\mathbb{Z}/(q-1)\mathbb{Z}\rightarrow\mathbb{C}$ by $f(r):=G(\chi\circ S, \pi_{*}\rho_{*}^r)$. Since the characters $\rho_{*}^r$ are constant on the circles $C_d$, it is possible to rewrite the expression for $f(r)$ into the form
\begin{displaymath}
f(r)=\sum_{z\in\mathbb{L}^{*}}\chi(S(z))\pi_{*}(z)\rho_{*}^r(z)=\sum_{x\in\mathbb{F}_q^{*}}\rho_{*}^r(x)\sum_{t\in C}\chi(S(xt))\pi_{*}(t)=q\sum_{x\in\mathbb{F}_q^{*}}\rho^r(x)J_\pi(x)=q\sum_{1\leq k<q-1}e^{\frac{2\pi ikr}{q-1}}J_\pi(\xi^k),
\end{displaymath}
where $z=xt$ and $x=\xi^k\in\mathbb{F}_q^{*}$, $t\in C$. Thus, the function $f(r)$ can be expressed as a sum of different orthogonal components. The amplitude of each component  $e^{\frac{2\pi i\cdot k}{q-1}}$ is $qJ_\pi(\xi^k)$.

From now on assume that the character $\pi^*=\pi$ is of order $3$, which means that the corresponding character $\pi_{*}$ is also of order $3$. Because of \eqref{res}, this is possible only if $q=2^{2k+1}$.

It is possible to compute the exact value of $f(0)$ by using the fact that the character $\pi^*$ is constant on the lines $l_t$ and the fact that $\sum_{t\in C}\pi(t)=0$. Indeed,
\begin{displaymath}
f(0)=\sum_{t\in C}\pi(t)\sum_{x\in F_q^*}\chi(S(xt))=q-1-\sum_{t\in C\setminus\{1\}}\pi(t)=q.
\end{displaymath}
It is possible to use the Hasse--Davenport product relation on the characters $\pi_{*},\rho_{*}$ and $\chi\circ S$ in order to obtain
\begin{equation}\label{eq:4}
\prod_{a=0}^{2}G(\chi\circ S, \pi_{*}^{a}\rho_{*}^r)=-\rho^{-3r}(3)G(\chi\circ S,\rho_{*}^{3r})\prod_{a=0}^{2}G(\chi\circ S, \pi_{*}^{a}) \textrm{ when } r\neq0.
\end{equation}
Now we have to simplify \eqref{eq:4}.  On the left-hand side of \eqref{eq:4} notice that $f(r)=G(\chi\circ S, \pi_{*}^{1}\rho_{*}^r)=G(\chi\circ S, \pi_{*}^{2}\rho_{*}^r)$. The reason for that is because characters $\pi$ and $\pi^2=\overline{\pi}$ define the same Bessel function $J_\pi$ on $\mathbb{F}_q^*$ and hence the same function $f(r)$. Now, the left-hand side of \eqref{eq:4} becomes
\begin{displaymath}
\prod_{a=0}^{2}G(\chi\circ S, \pi_{*}^{a}\rho_{*}^r)=G(\chi\circ S,\rho_{*})f(r)^2  \textrm{ when } r\neq0.
\end{displaymath}
On the right-hand side notice that $\rho_{*}^{-3r}(3)=1$ since $1=3$ in fields of characteristic $2$. Additionally, $q=f(0)=G(\chi\circ S, \pi_{*})=G(\chi\circ S, \pi_{*}^{2})$. And finally, $G(\chi\circ S, \pi_{*}^{0})=-1$ by Prorosition \ref{Gauss:triv}.2 since $\pi_*^{0}$ is a trivial character. Now, the right-hand side of \eqref{eq:4} becomes
\begin{equation}
-\rho^{-3r}(3)G(\chi\circ S,\rho_{*}^{3r})\prod_{a=0}^{2}G(\chi\circ S, \pi_{*}^{a})=q^2G(\chi\circ S,\rho_{*}^{3r})  \textrm{ when } r\neq0.
\end{equation}
By using all described simplifications and by dividing both sides of \eqref{eq:4} by $G(\chi\circ S,\rho_{*})$ we obtain
\begin{equation}\label{eq:5}
f(r)^2=\frac{q^2G(\chi\circ S,\rho_{*}^{3r})}{G(\chi\circ S, \rho_{*}^r)} \textrm{ when } r\neq0.
\end{equation}
To further simplify \eqref{eq:5} we can use part 3 of Proposition \ref{Gauss:triv} and the fact that $\rho_{*}=\rho\circ N$ to get
\begin{displaymath}
\frac{q^2}{G(\chi\circ S, \rho_{*}^{r})}=G(\chi\circ S, \overline{\rho}_{*}^r)=G(\chi\circ S, \rho_{*}^{-r})  \textrm{ when } r\neq0.
\end{displaymath}
This transforms \eqref{eq:5} into
\begin{displaymath}\label{eq:HD}
f(r)^2=G(\chi\circ S,(\rho\circ N)^{3r})G(\chi\circ S, (\rho\circ N)^{-r})  \textrm{ when } r\neq0.
\end{displaymath}
By using the Hasse--Davenport lifting relation the last equation can be transformed into
\begin{displaymath}
f(r)^2=G(\chi,\rho^{3r})^2G(\chi, \rho^{-r})^2  \textrm{ when } r\neq0,
\end{displaymath}
and finally
\begin{equation}
f(r)=\pm G(\chi,\rho^{3r})G(\chi, \rho^{-r})  \textrm{ when } r\neq0.
\end{equation}
Now, we define a function $g:\mathbb{Z}/(q-1)\mathbb{Z}\rightarrow\mathbb{C}$ such that $g(r)=G(\chi,\rho^{3r})G(\chi, \rho^{-r})$ for all $r\in\mathbb{Z}/(q-1)\mathbb{Z}$, explicitly
\begin{displaymath}
g(r)=\sum_{x\in\mathbb{F}_q^*}\rho(x)^{3r}\chi(x)\sum_{y\in\mathbb{F}_q^*}\rho^{-r}(y)\chi(y).
\end{displaymath}
The variable change $y=tx^3$, $t\in\mathbb{F}_q^*$ in previous sum (the function $tx^3$ is a bijection as long as $q=2^{2k+1}$) gives
\begin{displaymath}
g(r)=\sum_{t\in\mathbb{F}_q^*}\sum_{x\in\mathbb{F}_q^*}\rho(x)^{3r}\rho(tx^3)^{-r}\chi(x+tx^3)=\sum_{t\in\mathbb{F}_q^*}\rho(t)^{-r}\sum_{x\in\mathbb{F}_q^*}\chi(x+tx^3)=\sum_{0\leq k <q-1}e^{-2\pi i kr}\sum_{x\in\mathbb{F}_q^*}\chi(x+\xi^kx^3)
\end{displaymath}
which is the Fourier transform of $g(r)$. Coefficients corresponding to frequencies $e^{-2\pi i kr}$ are given by the sums
\begin{displaymath}
a_{-k}=\sum_{x\in\mathbb{F}_q^*}\chi(x+\xi^kx^3)\in\{-1,2^{2+1}-1,-2^{k+1}-1\},
\end{displaymath}
where exact set of the values is determined according to Proposition \ref{Carlitz}. Notice that $1=g(0)\neq f(0)=q$. This can be adjusted by introducing the new  function $g_1(r)=g(r)+(q-1)\delta(x)$. We have
\begin{displaymath}
g_1(r)=\sum_{0\leq k <q-1}e^{-2\pi i kr}\sum_{x\in\mathbb{F}_q}\chi(x+\xi^kx^3) \textrm{ for all } 0\leq r<q-1.
\end{displaymath}
Finally, by comparing properties of functions $g_1(r)$ and $f(r)$ we observe
\begin{enumerate}
\item $g_{1}(0)=f(0)=q$ and $f(r)=\pm g_{1}(r)$ for all $r\in\mathbb{Z}/(q-1)\mathbb{Z}$.
\item $|g_{1}(r)|=q$ for all $r\in\mathbb{Z}/(q-1)\mathbb{Z}$ and $gcd(q,q-1)=1$.
\item $f(r)$ can be expressed in the form $f(r)=\sum_{0\leq j<q-1}a_je^{2\pi ijr/(q-1)}$ where $a_j=qJ_\pi(\xi^k)\in\mathbb{Z}$ for all $0\leq j<q-1$ where  by Lemma \ref{intJ} all values of $qJ_\pi$ are integral.
\item $g_1(r)$  can be expressed in the form $g_1(r)=\sum_{0\leq j<q-1}b_je^{2\pi ijr/(q-1)}$ where  $b_j=\sum_{x\in\mathbb{F}}\chi(x+\xi^{-k}x^3)\in\{0,\pm 2^{k+1}\}$ according to Proposition \ref{Carlitz}.
\end{enumerate}
From Properties 1-4 we conclude that the functions $g_1(r)$ and $f(r)$ satisfy the requirements of Lemma \ref{lemma1} and thus either $g_1(r)=f(r)$ or $g_1(r)=-f(r)$ for all $r$. Since $f(0)=g_1(0)=q$ we conclude that $g_1(r)=f(r)$ for all $r$ and
\begin{displaymath}
J_\pi(\xi^k)=\frac{1}{q}\sum_{x\in\mathbb{F}}\chi(x+\xi^{-k}x^3)\in\{0,\pm\sqrt{2/q}\} \textrm{ for all } 0\leq k<q-1
\end{displaymath}
determining the angle set of $\mathbb{L}_\pi$.
\end{proof}
\noindent Small-dimensional line systems coming from Theorem \ref{result} deserve special remarks.
\begin{remark}
If $q=8$, then Theorem \ref{result} gives a system of $63$ $\{0,1/2\}$-angular lines in $\mathbb{R}^7$ which is isomorphic to the line system generated by the roots of the lattice $E7$.
\end{remark}
\begin{remark}
If $q=32$ then a system of $1023$ $\{0,1/4\}$-angular lines in $\mathbb{R}^{31}$ given by Theorem \ref{result} is not unique. A nonisomorphic line system with the same parameters  was found by the author of this article via a (nonexhaustive) computer assisted search.
\end{remark}
\bigskip
\bigskip
\section{Sets of biangular lines related to representations of $\SL(2,\mathbb{F}_{3^{k}})$}
In this section we investigate complex line systems associated with $(q-1)/2$-dimensional representations of the groups $\SL(2,\mathbb{F}_q)$ with $q$ being odd. Values of Bessel functions associated with these representations are explicitly determined in \cite{U}. However, the corresponding line systems are not investigated anywhere and seem to remain unknown. 

When $q$ is odd, the circle $C$ contains an even number of elements and there exist a unique multiplicative character $\pi_2$ of $C$ of order 2. Let $\theta=\{y_d \mid y_d\in C_d \textrm{ for every } d\in\mathbb{F}_q^*\}$ be some system of representatives of circles $C_d$ and $f:\mathbb{F}_q^*\rightarrow\mathbb{L}^*$ a function defined by the equation $f(d)=x_d$. It turns out that the representation  $(H(\mathbb{F}_q^*),\phi_{\pi_2})$ can be further decomposed into invariant subspaces. 
\begin{proposition}[See \cite{repr}]
\label{Decomp}
Let $q$ be odd. If $u,v\in\mathbb{F}_q^*$ such that $uv\in NQR_q$, then $T_{g,\pi_2,\theta}(u,v)=0$ for all $g\in\SL(2,\mathbb{F}_q)$.
\end{proposition}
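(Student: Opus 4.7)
The plan is to split the problem via the Bruhat decomposition and reduce the nontrivial case to a vanishing of the Bessel function $J_{\pi_2}$. When $g\in B$, formula \eqref{Erepr} specialises to $T_{g,\pi_2,\theta}(v,u)=\pi_2(ax_v/x_u)\chi(abv)\delta(u-a^2v)$, so the entry is nonzero only when $u=a^2v$; since this forces $uv=a^2v^2\in QR_q$, the entry vanishes automatically whenever $uv\in NQR_q$. For $g\notin B$, formula \eqref{Brepr} reduces the desired statement to showing $J_{\pi_2}(x_u\sigma(x_v)/c)=0$ when $uv\in NQR_q$; since $N(x_u\sigma(x_v)/c)=uv/c^2$ lies in the same square class as $uv$, the whole proposition reduces to the key claim: \emph{$J_{\pi_2}(z)=0$ for every $z\in\mathbb{L}^*$ with $N(z)\in NQR_q$}.

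To prove the key claim I would work in terms of Gauss sums on $\mathbb{L}$. Fix any extension $\Lambda_0$ of $\pi_2$ to a multiplicative character of $\mathbb{L}^*$; every extension then has the form $\Lambda_\rho:=\Lambda_0\cdot(\rho\circ N)$ with $\rho\in\widehat{\mathbb{F}_q^*}$. Partitioning $\mathbb{L}^*=\bigsqcup_{d\in\mathbb{F}_q^*}C_d$ and writing each $s\in C_d$ as $s=x_dt$ with $t\in C$ inside the Gauss sum collapses the inner $C$-sum into a Bessel function and yields
\begin{displaymath}
G(\chi\circ(-S),\Lambda_\rho)=q\sum_{d\in\mathbb{F}_q^*}h(d)\rho(d),\qquad h(d):=\Lambda_0(x_d)J_{\pi_2}(x_d).
\end{displaymath}
This exhibits the family of Gauss sums on $\mathbb{L}$ as $q$ times the multiplicative Fourier transform of $h$ on $\mathbb{F}_q^*$. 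By orthogonality, $h$ is supported on $QR_q$ if and only if $\widehat{h}(\rho)=\widehat{h}(\rho\eta)$ for every $\rho$, where $\eta$ is the quadratic character of $\mathbb{F}_q^*$. Evaluating on a generator of $\mathbb{L}^*$ identifies $\eta\circ N$ with the unique quadratic character $\Pi$ of $\mathbb{L}^*$, so the required invariance becomes the identity $G(\chi\circ(-S),\Lambda)=G(\chi\circ(-S),\Lambda\Pi)$ for every extension $\Lambda$ of $\pi_2$.

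The main obstacle is this Gauss-sum identity. My intended attack is the Hasse--Davenport product relation (Proposition \ref{HD}) applied with $m=2$, outer character $\Lambda$, and inner character $\Pi$: it expresses $G(\chi\circ(-S),\Lambda)\cdot G(\chi\circ(-S),\Lambda\Pi)$ in terms of $G(\chi\circ(-S),\Lambda^2)$ and $G(\chi\circ(-S),\Pi)$. Since both $\Lambda^2$ and $\Pi$ are trivial on $C$, they factor through the norm, and the Hasse--Davenport lifting relation rewrites the right-hand side as a product of squared Gauss sums over $\mathbb{F}_q$. Combined with the fact that both factors on the left-hand side have absolute value $q$, pinning down the ratio as $+1$ completes the argument. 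As a cleaner shortcut, one may simply invoke Uchiyama's explicit evaluation of $J_{\pi_2}$ cited as \cite{U} at the start of Section~5, from which the vanishing on $NQR_q$ is transparent.
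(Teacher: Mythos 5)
The paper itself offers no proof of this proposition (it simply cites Tanaka \cite{repr}), so your argument must stand on its own. Its first two stages do: for $g\in B$ the delta factor in \eqref{Erepr} forces $u=a^2v$ and hence $uv\in QR_q$, and for $g\notin B$ formula \eqref{Brepr} together with $N(x_u\sigma(x_v)/c)=uv/c^2$ correctly reduces everything to the claim that $J_{\pi_2}(z)=0$ whenever $N(z)\in NQR_q$; the Fourier reformulation of that claim as the identity $G(\chi\circ(-S),\Lambda)=G(\chi\circ(-S),\Lambda\Pi)$ for every extension $\Lambda$ of $\pi_2$ is also sound. The gap is in the final step. The Hasse--Davenport product relation with $m=2$ gives only the \emph{product} $G(\chi\circ(-S),\Lambda)\,G(\chi\circ(-S),\Lambda\Pi)$ in terms of $G(\chi\circ(-S),\Lambda^2)$ and $G(\chi\circ(-S),\Pi)$; knowing this product together with the fact that both factors have modulus $q$ determines their ratio only up to an arbitrary unimodular number, not as $+1$. ``Pinning down the ratio'' is exactly the content of the claim, and you supply no mechanism for it. The fallback via \cite{U} also fails as stated: Proposition \ref{Unitary} evaluates $J_{\pi_2}$ only at $z\in\mathbb{F}_q^*$, i.e., on the circles $C_d$ with $d\in QR_q$, and says nothing about the circles with $d\in NQR_q$, which contain no elements of $\mathbb{F}_q^*$.

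The key claim does, however, have a short elementary proof that avoids Gauss sums entirely. In $qJ_\pi(z)=\sum_{t\in C}\chi(-S(tz))\pi(t)$ substitute $t\mapsto\sigma(t)$ and use $S(\sigma(w))=S(w)$ together with $\pi(\sigma(t))=\overline{\pi(t)}$ for $t\in C$ to obtain $J_\pi(z)=J_{\overline{\pi}}(\sigma(z))$; for $\pi=\pi_2=\overline{\pi}_2$ this reads $J_{\pi_2}(z)=J_{\pi_2}(\sigma(z))$. On the other hand $\sigma(z)/z=z^{q-1}\in C$, so \eqref{eq:Bessel} gives $J_{\pi_2}(\sigma(z))=\pi_2(z^{q-1})J_{\pi_2}(z)$. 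Hence $J_{\pi_2}(z)=0$ unless $z^{q-1}$ is a square in $C$; since $(z^{q-1})^{(q+1)/2}=z^{(q^2-1)/2}$, that happens exactly when $z$ is a square in $\mathbb{L}^*$, i.e., when $N(z)\in QR_q$. Grafting this onto your (correct) reduction yields a complete proof.
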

\begin{corollary}
The representation $(H(\mathbb{F}_q^*),\phi_{\pi_2})$ can be decomposed into two subrepresentations, namely $(H(QR_q),\phi_{\pi_2}^+)$ and $(H(NQR_q),\phi_{\pi_2}^-)$ where
\begin{displaymath}
\phi_{\pi_2}^+(g)=T_{g,\pi_2,\theta}^{+}\in\GL(H(QR_q)) \textrm{ such that } T_{g,\pi_2,\theta}^{+}(u,v)=T_{g,\pi_2,\theta}(u,v) \textrm{ for all } u,v\in QR_q
\end{displaymath}
and
\begin{displaymath}
\phi_{\pi_2}^-(g)=T_{g,\pi_2,\theta}^{-}\in\GL(H(NQR_q)) \textrm{ such that } T_{g,\pi_2,\theta}^{-}(u,v)=T_{g,\pi_2,\theta}(u,v) \textrm{ for all } u,v\in NQR_q.
\end{displaymath}
\end{corollary}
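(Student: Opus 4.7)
The plan is to derive the corollary as an essentially immediate consequence of Proposition \ref{Decomp}, which forces a block structure on the matrices $T_{g,\pi_2,\theta}$. Since $\mathbb{F}_q^*$ partitions as the disjoint union $QR_q \sqcup NQR_q$, one gets the orthogonal decomposition $H(\mathbb{F}_q^*) = H(QR_q) \oplus H(NQR_q)$ (identifying a function on a subset with the function on $\mathbb{F}_q^*$ that vanishes outside it). It then suffices to verify that each summand is invariant under every $\phi_{\pi_2}(g)$ and that the two restrictions so obtained are themselves group homomorphisms.

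For invariance, I would argue as follows. The subgroup $QR_q \leq \mathbb{F}_q^*$ has index $2$, so $QR_q \cdot NQR_q = NQR_q$ and $NQR_q \cdot QR_q = NQR_q$. Hence whenever $u$ and $v$ lie in different cosets $QR_q$, $NQR_q$, their product $uv$ is a nonsquare, and Proposition \ref{Decomp} gives $T_{g,\pi_2,\theta}(u,v) = 0$ for every $g \in \SL(2,\mathbb{F}_q)$. This is precisely the statement that, in the basis $\{\delta_v : v \in \mathbb{F}_q^*\}$ of $H(\mathbb{F}_q^*)$ ordered so that $QR_q$-indices come first and $NQR_q$-indices come second, each matrix $T_{g,\pi_2,\theta}$ is block-diagonal. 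Consequently $\phi_{\pi_2}(g)$ sends $H(QR_q)$ into $H(QR_q)$ and $H(NQR_q)$ into $H(NQR_q)$.

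With the block-diagonal structure in hand, define $T_{g,\pi_2,\theta}^{+}$ and $T_{g,\pi_2,\theta}^{-}$ as the restrictions to $H(QR_q)$ and $H(NQR_q)$, respectively, as in the corollary's statement. Since a product of two matrices block-diagonal with respect to the same partition is again block-diagonal with blocks multiplying independently, the identity $T_{g_1 g_2,\pi_2,\theta} = T_{g_1,\pi_2,\theta} T_{g_2,\pi_2,\theta}$ (which holds because $\phi_{\pi_2}$ is a homomorphism) immediately restricts to $T_{g_1 g_2,\pi_2,\theta}^{\pm} = T_{g_1,\pi_2,\theta}^{\pm} T_{g_2,\pi_2,\theta}^{\pm}$. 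Thus $\phi_{\pi_2}^+$ and $\phi_{\pi_2}^-$ are well-defined group homomorphisms, giving the two advertised subrepresentations.

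There is no serious obstacle here: all the work is already done by Proposition \ref{Decomp}; what remains is the routine linear-algebraic observation that a vanishing pattern of off-diagonal blocks, uniform over the group, is exactly the definition of a decomposition into invariant subspaces. The only small verification worth flagging is that the multiplicative coset structure of $QR_q$ in $\mathbb{F}_q^*$ matches the vanishing condition "$uv \in NQR_q$" of Proposition \ref{Decomp}; once that is noted, the corollary is purely formal.
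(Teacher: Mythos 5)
Your argument is correct and is precisely the reasoning the paper leaves implicit: the corollary is stated without proof as an immediate consequence of Proposition \ref{Decomp}, and your observation that $uv\in NQR_q$ exactly when $u,v$ lie in different cosets of $QR_q$, forcing a block-diagonal structure whose blocks multiply independently, is the intended justification. Nothing is missing; the restrictions are invertible because each $T_{g,\pi_2,\theta}$ is unitary and block-diagonal, so both blocks are unitary.
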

\begin{proposition}[Curtis, Shinoda \cite{U}]
\label{Unitary}
If $q$ is odd, then for all $z\in\mathbb{F}_q^*$
\begin{displaymath}
J_{\pi_2}(z)=-\pi_2'(z)G(\chi,\pi_2')(\chi(-2z)-\chi(2z))/q,
\end{displaymath}
where $\pi_2'$ is the unique multiplicative character of $\mathbb{F}_q^*$ of order 2.
\end{proposition}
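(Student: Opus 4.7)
The plan is to evaluate $qJ_{\pi_2}(z) = \sum_{t \in C}\chi(-S(tz))\pi_2(t)$ directly, reducing the sum over the circle $C$ to translated Gauss sums on $\mathbb{F}_q$. For $z \in \mathbb{F}_q^*$, the identity $\sigma(t) = t^{-1}$ on $C$ together with $\sigma(z) = z$ gives $S(tz) = z(t + t^{-1})$, so the summand depends on $t$ only through $u := t + t^{-1} \in \mathbb{F}_q$. The involution $t \leftrightarrow t^{-1}$ organizes the fibers: the fixed points $\pm 1$ map to $\pm 2$, while the remaining pairs $\{t, t^{-1}\}$ map two-to-one onto the set $U := \{u \in \mathbb{F}_q : u \neq \pm 2,\ u^2 - 4 \in NQR_q\}$. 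Since $\pi_2$ has order $2$, $\pi_2(t^{-1}) = \pi_2(t)$, so this common value defines a sign $\epsilon(u)$ on $U$, and
\[
qJ_{\pi_2}(z) = \chi(-2z) + \pi_2(-1)\chi(2z) + 2\sum_{u \in U}\chi(-zu)\,\epsilon(u).
\]

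The crucial step is the identification $\epsilon(u) = \pi_2'(u+2)$. If $t = s^2$ with $s \in C \setminus \{\pm 1\}$, then $v := s + s^{-1} \in \mathbb{F}_q$ satisfies $v^2 = u + 2$, so $u + 2 \in QR_q$. Conversely, suppose $u \in U$ and $u + 2 = v^2$ for some $v \in \mathbb{F}_q$. The non-residue hypothesis $u^2 - 4 \in NQR_q$ in the definition of $U$ forces $u - 2 = v^2 - 4 \in NQR_q$, so $X^2 - vX + 1$ is irreducible over $\mathbb{F}_q$; its roots $s, s^{-1}$ lie in $\mathbb{L} \setminus \mathbb{F}_q$ with norm $1$ (from the constant term), hence in $C$, and their squares form the pair $\{t, t^{-1}\}$, exhibiting $t$ as an element of $C^2$.

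With $\epsilon(u) = \pi_2'(u+2)$ in hand, I would extend the $U$-sum to all $u \in \mathbb{F}_q$ via the indicator $(1 - \pi_2'(u^2-4))/2$, correcting by a half weight at $u = 2$ where the convention $\pi_2'(0) = 0$ makes the indicator off by $1/2$. The factorization $\pi_2'(u+2)\pi_2'(u^2-4) = \pi_2'(u-2)$, valid for $u \neq -2$, splits the extended sum into two pieces, and the substitutions $u \mapsto u \pm 2$ convert each into a translate of the classical Gauss sum $G(\chi, \pi_2')$. After collecting all contributions, the identity $\pi_2(-1) = -\pi_2'(-1)$---verified in both cases $q \equiv 1 \pmod 4$ (where $-1$ is a square in $\mathbb{F}_q^*$ but not in $C$) and $q \equiv 3 \pmod 4$ (conversely, since then $4 \mid q+1$)---makes the surviving boundary terms $\chi(\pm 2z)$ cancel, leaving the stated formula.

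The main obstacle is the key lemma $\epsilon(u) = \pi_2'(u+2)$. The forward direction is immediate, but the converse crucially exploits the non-residue condition built into $U$: without it, $X^2 - vX + 1$ could split over $\mathbb{F}_q$, placing $s$ in $\mathbb{F}_q^*$ where $N(s) = s^2 \neq 1$ in general, so $s$ would fail to lie in $C$. A secondary bookkeeping issue is the careful handling of the boundary contributions at $u = \pm 2$ when extending the indicator sum, where the asymmetry $\pi_2'(0)=0$ must be tracked against the eventual cancellation of the $\chi(\pm 2z)$ terms.
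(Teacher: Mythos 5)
The paper offers no proof of this proposition at all --- it is quoted from Curtis--Shinoda \cite{U} --- so you are supplying an argument where the paper supplies none, and your strategy is the natural elementary one. Almost everything checks out: the reduction $S(tz)=z(t+t^{-1})$ for $z\in\mathbb{F}_q^*$; the fiber analysis of $t\mapsto t+t^{-1}$ on $C$ (fixed points $\pm1\mapsto\pm2$, the rest two-to-one onto $U=\{u: u^2-4\in NQR_q\}$); the key lemma $\epsilon(u)=\pi_2'(u+2)$, with the converse direction correctly exploiting $u^2-4\in NQR_q$ to force $X^2-vX+1$ irreducible and its roots onto $C$; the indicator extension with its half-weight corrections at $u=\pm2$; the two translated Gauss sums; and the identity $\pi_2(-1)=-\pi_2'(-1)$. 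I verified each of these steps, including the extra boundary term at $u=-2$ coming from $\pi_2'(u+2)\pi_2'(u^2-4)=\pi_2'(u-2)$.

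The gap is in the final, unwritten assembly: collecting the terms does not leave the stated formula. Each translated Gauss sum carries the factor $\pi_2'(-z)$ (since $\sum_{w}\chi(-zw)\pi_2'(w)=\pi_2'(-z)G(\chi,\pi_2')$), and after the $\chi(2z)$ boundary terms cancel via $\pi_2(-1)=-\pi_2'(-1)$ one is left with
\begin{displaymath}
qJ_{\pi_2}(z)=\pi_2'(-z)\,G(\chi,\pi_2')\bigl(\chi(2z)-\chi(-2z)\bigr)=-\pi_2'(-1)\,\pi_2'(z)\,G(\chi,\pi_2')\bigl(\chi(-2z)-\chi(2z)\bigr),
\end{displaymath}
which is the claimed identity multiplied by $\pi_2'(-1)$. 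For $q\equiv1\pmod{4}$ this is the Proposition; for $q\equiv3\pmod{4}$ it differs by a sign, and the sign is real: for $q=3$, $z=1$ a direct computation gives $J_{\pi_2}(1)=(\chi(1)+\chi(2)-2)/3=-1$, while the right-hand side of the Proposition evaluates to $-(i\sqrt{3})(i\sqrt{3})/3=+1$. So either your last step hides a sign slip, or (as the $q=3$ check indicates) the statement as transcribed with this paper's conventions is itself off by the factor $\pi_2'(-1)$; in either case you must display the final collection of terms and state exactly what your method proves, namely $J_{\pi_2}(z)=-\pi_2'(-z)G(\chi,\pi_2')(\chi(-2z)-\chi(2z))/q$. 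Since only $|J_{\pi_2}|$ is used in Remark \ref{angle} and Theorem \ref{result1}, the discrepancy is harmless downstream, but a proof of the literal statement cannot end the way you describe.
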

\begin{remark}\label{angle}
Propositions \ref{Decomp} and \ref{Unitary} together with \eqref{eq:Bessel} makes it possible to explicitly compute absolute values of the function $J_{\pi_2}$. Indeed on the circles $C_d$ where $d\in QR_q$, if $x_d$ is any of two possible elements such that $x_d^2=d$, then
\begin{displaymath}
J_{\pi_2}(tx_d)=-\pi_2'(x_d)G(\chi,\pi_2)(\chi(-2x_d)-\chi(2x_d))\overline{\pi}_2(t)/q \textrm{ for all } t\in C.
\end{displaymath}
Since 
\begin{displaymath}
|\pi_2'(x_d)|=|\pi_2(t)|=1\textrm{, } |G(\chi,\pi_2)|=\sqrt{q} \textrm{ and }|\chi(-2x_d)-\chi(2x_d)|\in\{|2\textrm{sin}(2\pi k/p)|\mid k\in\mathbb{Z}/p\mathbb{Z}\}
\end{displaymath} 
for all $x_d\in\mathbb{F}_q^*$ and $t\in C$, the absolute values of the function $J_{\pi_2}$ on the circles $C_d$ where $d\in QR_q$ always belong to the set 
\begin{displaymath}
\mathcal{A}_{\pi_2}=\{|\frac{2\textrm{sin}(2\pi k/p)}{\sqrt{q}}| \mid k\in\mathbb{Z}/p\mathbb{Z}\}.
\end{displaymath}
On the other hand, Proposition \ref{Decomp}  implies that $J_{\pi_2}(z)=0$ if $z\in C_d$, where $d\in NQR_q$. By combining two previous cases we conclude that the absolute values of the function $J_{\pi_2}$ are always contained in the set $\mathcal{A}_\pi$.
\end{remark}

In Section 4 we defined line systems $\mathcal{L}_{\pi_2}$ in the case when $q$ is even through the sets of representatives $\Phi_\pi$. Line systems $\mathcal{L}_{\pi_2}$ in the case where $q$ is odd can be defined similarly by taking all row vectors of the matrices $T_{g,\pi_2,\theta}$ modulo phases.
\begin{definition}\label{LSyst2}
The line system where each line is represented by a vector in the set
\begin{displaymath}
\Phi_{\pi_2,\theta}=\{\chi_a (J_{\pi_2,y}\circ f)\mid a\in\mathbb{F}_q,y\in\mathbb{F}_q^*\}\cup\{\delta_y\mid y\in\mathbb{F}_q^*\}\subset H(\mathbb{F}_q^*).
\end{displaymath} 
is denoted by $\mathcal{L}_{\pi_2}$.
\end{definition}
According to Remark \ref{INV} the line system $\mathcal{L}_{\pi_2}$ doesn't depend on the choice of $\theta$. The following is an analogue to Proposition \ref{CodeSize1}.
\begin{proposition}\label{Expl}
If $q>3$ is odd, the set of representatives $\Phi_{\pi_2,\theta}$ defines a line system $\mathcal{L}_{\pi_2}$ consisting of $q^2-1$ complex lines in $H(\mathbb{F}_q^*)$. All vectors in $\Phi_{\pi_2,\theta}$ are non-parallel.
\end{proposition}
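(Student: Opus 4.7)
The plan is to analyze $\Phi_{\pi_2,\theta}$ through the coset structure of the Borel subgroup $B$ in $G=\SL(2,\mathbb{F}_q)$. Since the $g\in B$ case of \eqref{Erepr} shows that $\phi_{\pi_2}(B)$ acts by monomial matrices, for any $g\in G$ and any $b\in B$ the product $T_{bg,\pi_2,\theta}=T_{b,\pi_2,\theta}T_{g,\pi_2,\theta}$ has rows equal to unit-scalar multiples and permutations of the rows of $T_{g,\pi_2,\theta}$. Hence the rows coming from a single coset $Bg$ represent the same collection of lines, and since $|G/B|=q+1$ with $q-1$ rows per matrix, there can be at most $(q+1)(q-1)=q^2-1$ lines. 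The set $\Phi_{\pi_2,\theta}$ exhibits this concretely: the rows of $T_I$ give the $q-1$ delta vectors, and for each non-$B$ coset one picks any representative $g$ with $c$-entry $c_0$, so that the row indexed by $v$ is a unit scalar times $u\mapsto \chi((d_0/c_0)u)J_{\pi_2}(y_u\sigma(y_v)/c_0)$, matching the proposition's parametrization with the proposition's $a,c,d$ corresponding to $d_0/c_0$, $c_0$, and $v$.

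For the matching lower bound I produce $q^2-1$ distinct lines. The identity coset gives the $q-1$ distinct delta lines. In each of the $q$ remaining cosets fix a representative $g\notin B$: the rows of the unitary matrix $T_{g,\pi_2,\theta}$ are orthonormal and therefore lie on $q-1$ pairwise distinct lines. The delicate point is ruling out coincidences between different cosets. Suppose the row indexed by $v$ of $T_{g,\pi_2,\theta}$ is proportional to the row indexed by $v'$ of $T_{g',\pi_2,\theta}$; both being unit vectors forces the scalar to be a unit, and hence $|\langle T_{g,\pi_2,\theta}(v,:),T_{g',\pi_2,\theta}(v',:)\rangle|=1$. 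Because $T$ is a unitary representation, this inner product equals $T_{g(g')^{-1},\pi_2,\theta}(v,v')$, and since the cosets differ $g(g')^{-1}\notin B$; therefore by \eqref{Brepr} the entry has absolute value $|J_{\pi_2}(y_{v'}\sigma(y_v)/c_1)|$ for the $c_1$-entry of $g(g')^{-1}$. This is the main obstacle.

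Closing this requires a uniform bound $|J_{\pi_2}|<1$, which Remark \ref{angle} delivers: its values lie in $\{|2\sin(2\pi k/p)|/\sqrt q:k\in\mathbb{Z}/p\mathbb{Z}\}$, so $|J_{\pi_2}|\leq 2/\sqrt q<1$ whenever $q\geq 5$, which is exactly where the hypothesis $q>3$ is used. Finally I have to separate the non-delta rows from the $q-1$ delta lines by comparing supports: by Proposition \ref{Decomp} a non-delta row vanishes at every $u$ with $uv\in NQR_q$, and by Proposition \ref{Unitary} its remaining entries vanish exactly when $Tr(\sqrt{uv}/c)=0$. A short count based on the fact that $\ker Tr$ is an $\mathbb{F}_p$-hyperplane of $\mathbb{F}_q$ shows the support has cardinality $p^{n-1}(p-1)/2$, which is at least $2$ whenever $q=p^n>3$, ruling out proportionality with any delta vector. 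Combining the three parts yields exactly $(q-1)+q(q-1)=q^2-1$ distinct lines, matching the upper bound.
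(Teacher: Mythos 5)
Your argument is correct and is essentially the paper's: both reduce inner products between rows of distinct matrices to entries of $T_{g_1g_2^{-1},\pi_2,\theta}$ with $g_1g_2^{-1}\notin B$, bound these by $\max\mathcal{A}_{\pi_2}<1$ for $q>3$, and use unitarity of a single matrix for rows within one basis. Your support-counting step for separating the delta lines is an unnecessary detour, since $gI^{-1}=g\notin B$ already puts that case under the same inner-product bound (which is how the paper handles it, by including the identity in its set $S$); otherwise your explicit coset bookkeeping merely makes the upper bound $q^2-1$ more transparent than the paper's bare assertion about the cardinality of $\Phi_{\pi_2,\theta}$.
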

\begin{proof}
We show that the set $S\subset \SL(2,\mathbb{F}_q)$ where
\begin{displaymath}
S=\{\arraycolsep=1pt\def\arraystretch{1}\left[
\scriptsize
\begin{array}{cc}
1&0\\
0&1
\end{array}
\right]\}\cup \{\left[
\scriptsize
\begin{array}{cc}
0&-a^{-1}\\
a&d
\end{array}
\right]\mid a\in\mathbb{F}_q^*,d\in\mathbb{F}_q\}
\end{displaymath}
generates $q^2-1$ complex lines with representatives in $\Phi_{\pi_2,\theta}$. The set of matrices 
\begin{displaymath}
S_1=\{g_1g_2^{-1}|g_1\neq g_2, g_1,g_2\in S\}
\end{displaymath}
contains only matrices which are not in $B$. Now, we have that
\begin{equation}\label{eq:group}
 \langle T_{g_1,\pi_2,\theta}(u,:),T_{g_2,\pi_2,\theta}(v,:)\rangle=T_{g_1g_2^{-1},\pi_2,\theta}(u,v)
\end{equation}
for all $g_1,g_2\in G$ because of the group property of the matrices $T_{g,\pi_2,\theta}$ where $g\in G$. In particular, \eqref{eq:group} holds for all $g_1,g_2\in S$. When $g_1,g_2\in S$ and $g_1\neq g_2$, we have $g_1g_2^{-1}\in S_1$ and the absolute values of the angles between row vectors of matrices $T_{g_1,\pi_2,\theta}$ and $T_{g_2,\pi_2,\theta}$ are contained in the set $\mathcal{A}_{\pi_2}$ because of \eqref{Brepr} for all matrices $T_{g,\pi_2,\theta}$ where $g\notin B$. If $g_1=g_2$, we have that 
\begin{displaymath}
\langle T_{g_1,\pi_2,\theta}(u,:),T_{g_1,\pi_2,\theta}(v,:)\rangle=0
\end{displaymath} 
for all $u\neq v$ because of the unitarity of matrices $T_{g_1,\pi_2,\theta}$.

Since every element of the set $\mathcal{A}_{\pi_2}$ is strictly smaller than $1$ as long as $q>3$, we conclude that every row vector of every matrix corresponding to an element in the set $S$ defines a different line, together producing $q^2-1$ different lines in $H(\mathbb{F}_q^*)$. On the other hand, cardinality of $\Phi_{\pi_2,\theta}$ cannot exceed $q^2-1$ which means that cardinality of the set $\Phi_{\pi_2,\theta}$ is exactly $q^2-1$.\qedhere
\end{proof}
\noindent Now we are ready to combine Propositions \ref{Decomp} to \ref{Expl} into the main result of this section.
\begin{theorem}\label{result1}
Let $q=3^k$ with $k\geq2$. Then the line system $\mathcal{L}_{\pi_2}$ of Definition \ref{LSyst2} is $(q-1)$-dimensional and $|\mathcal{L}_{\pi_2}|=q^2-1$. Additionally, $\mathcal{A}_{\pi_2}=\{0,\sqrt{3/q}\}$, and half of the lines, denoted by $\mathcal{L}_{\pi_2}^+$, are entirely in the subspace $H(QR_q)$ while the other half, denoted by $\mathcal{L}_{\pi_2}^-$, are in the subspace $H(NQR_q)$. 
\end{theorem}
\begin{proof}
The angle set $\mathcal{A}_{\pi_2}$ is given in the Remark \ref{angle}. The cardinality of $\mathcal{L}_{\pi_2}$ can be determined from Proposition \ref{Expl}. Proposition \ref{Decomp}  can be used to conclude that half of the vectors of the each basis $T_{g,\pi_2,\theta}$ where $g\in S$ lies in $H(QR_q)$ while other half lies in $H(NQR_q)$.
\end{proof}
\begin{remark}
If  $q=9$ then Theorem \ref{result1} produces a system of 40 $\{0,1/\sqrt{3}\}$-angular lines in $\mathbb{C}^4$. An isomorphic line system is given (for example) in \cite{Be} and \cite{Hog}.
\end{remark}
\section{Acknowledgements}
I would like to thank Professor Patric \"Osterg\aa rd for valuable comments.

\end{document}